\newcommand{\autosyncline}[1]{}
	\providecommand{\cvsId}{}
\providecommand{\cvsId}{\\\small\texttt{Id: nfpoly.tex,v 1.42 2008-08-20 18:18:21 jdemeyer Exp }}
\newcommand{\Author}{Jeroen Demeyer\thanks{The author is a Postdoctoral Fellow of the Research Foundation --- Flanders (FWO).
\textbf{Address:} Ghent University, Department of Pure Mathematics and Computer Algebra, Krijgslaan 281, 9000 Gent, Belgium.
\textbf{E-mail:} jdemeyer@cage.ugent.be.
}}
\newcommand{\Title}{Diophantine sets of polynomials over number fields}
\providecommand{\cvsId}{\\\small\texttt{Id: header.tex,v 1.53 2008-07-11 21:18:29 jdemeyer Exp }}
\newcommand{\twodigit}[1]{\ifthenelse{#1 < 10}{0#1}{#1}}
\providecommand{\Date}{\number\year--\twodigit{\number\month}--\twodigit{\number\day}}
\providecommand{\cvsId}{}
\author{\Author}
\title{\Title}
\date{\Date\cvsId}
\providecommand{\theoremname}{Theorem}
\providecommand{\maintheoremname}{Main Theorem}
\providecommand{\conjname}{Conjecture}
\providecommand{\openname}{Open Problem}
\providecommand{\questionname}{Question}
\providecommand{\propname}{Proposition}
\providecommand{\obsname}{Observation}
\providecommand{\lemmaname}{Lemma}
\providecommand{\mainlemmaname}{Main Lemma}
\providecommand{\corname}{Corollary}
\providecommand{\claimname}{Claim}
\providecommand{\definename}{Definition}
\providecommand{\examplename}{Example}
\providecommand{\cexamplename}{Counterexample}
\providecommand{\exercisename}{Exercise}
\providecommand{\factname}{Fact}
\providecommand{\factsname}{Facts}
\providecommand{\remarkname}{Remark}
\providecommand{\solutionname}{Solution}
\providecommand{\stepname}{Step}
\theoremstyle{plain}
	\newtheorem{theorem}{\theoremname}
	\newtheorem{theorem}{\theoremname}[chapter]
\newtheorem*{theorem*}{\theoremname}
\newtheorem*{mtheorem*}{\maintheoremname}
\newtheorem*{conj*}{\conjname}
\newtheorem*{open*}{\openname}
\newtheorem{prop}[theorem]{\propname}
\newtheorem*{prop*}{\propname}
\newtheorem*{obs*}{\obsname}
\newtheorem{lemma}[theorem]{\lemmaname}
\newtheorem*{lemma*}{\lemmaname}
\newtheorem*{mlemma*}{\mainlemmaname}
\newtheorem{cor}[theorem]{\corname}
\newtheorem*{cor*}{\corname}
\theoremstyle{definition}
\newtheorem{define}[theorem]{\definename}
\newtheorem*{define*}{\definename}
\newtheorem*{example*}{\examplename}
\newtheorem*{cexample*}{\cexamplename}
\newtheorem*{question*}{\questionname}
\newtheorem*{exercise*}{\exercisename}
\newtheorem*{fact*}{\factname}
\newtheorem*{facts*}{\factsname}
\theoremstyle{remark}
\newcounter{step}
\newcounter{case}
\newcommand{\inv}{^{-1}}
\newcommand{\frakp}{\mathfrak{p}}
\newcommand{\C}{\mathbb{C}}
\newcommand{\N}{\mathbb{N}}
\newcommand{\Q}{\mathbb{Q}}
\newcommand{\Z}{\mathbb{Z}}
\newcommand{\calC}{\mathcal{C}}
\newcommand{\calG}{\mathcal{G}}
\newcommand{\calO}{\mathcal{O}}
\newcommand{\calR}{\mathcal{R}}
\newcommand{\calS}{\mathcal{S}}
\newcommand{\calT}{\mathcal{T}}
\newcommand{\calU}{\mathcal{U}}
\newcommand{\chebX}{\mathrm{X}}
\newcommand{\chebY}{\mathrm{Y}}
\newcommand{\AS}[2][void]{\ifthenelse{\equal{#1}{void}}{\mathbb{A}\!^{#2}}{\mathbb{A}\!^{#2}(#1)}}
\newcommand{\PS}[2][void]{\ifthenelse{\equal{#1}{void}}{\mathbb{P}^{#2}}{\mathbb{P}^{#2}(#1)}}
\newcommand{\FF}[1]{\mathbb{F}\!_{#1}}
\renewcommand{\vec}{\overrightarrow}
\DeclareMathOperator{\lcm}{lcm}
\newcommand{\set}[2]{\{{#1}\mid{#2}\}}
\newcommand{\form}[1]{\langle{#1}\rangle}
\newcommand{\into}{\hookrightarrow}
\begin{document}

\maketitle

\begin{abstract}
\autosyncline{22\\nfpoly}Let $\calR$ be a recursive subring of a number field.
We show that recursively enumerable sets
are diophantine for the polynomial ring $\calR[Z]$.
\end{abstract}

\textbf{2000 MSC:}
11\autosyncline{29}D99 (primary), %
03D25, %
12L12, %
11R09, %
12\autosyncline{34}E10 (secondary). %
\\\textbf{Keywords:} Diophantine set, Recursively enumerable set, Hilbert's Tenth Problem.

\section{Introduction}\label{sec-intro}

\autosyncline{42\\nfpoly}Let $\calR$ be a recursive subring of a number field.
In this paper, we show that recursively enumerable (r.e.)\ subsets of $\calR[Z]^k$ are diophantine.

\autosyncline{45}For any recursively stable integral domain,
one can easily see that every diophantine set is recursively enumerable
(see the end of section~\ref{sec-definitions}).
However, the converse problem
--- are recursively enumerable sets diophantine? --- is much more difficult.

\autosyncline{51}In 1970, Matiyasevich (\cite{matiy-h10}) showed,
building on earlier work by Davis, Putnam and Robinson,
that r.e.\ sets are diophantine for the integers $\Z$.
This had as an immediate consequence the negative answer to Hilbert's Tenth Problem:
there exists no algorithm which can decide whether
\autosyncline{56}a diophantine equation over $\Z$ has a zero over $\Z$.
See \cite{davis-h10} for a good write-up
of the various steps in the proof that r.e.\ sets are diophantine for $\Z$,
and hence the negative answer to Hilbert's Tenth Problem.

\autosyncline{61}The undecidability of diophantine equations has been shown for many other rings and fields,
\cite{pheidas-zahidi} and \cite{poonen-notices} give a good overview of what is known.
On the other hand, the equivalence of r.e.\ and diophantine sets
is much stronger and much less is known.

\autosyncline{66}Apart from the original result for $\Z$,
this equivalence has been shown for $\Z[Z]$ by Denef (\cite{denef-zt}),
for $\calO_K[Z_1, \dots, Z_n]$ where $K$ is a totally real number field by Zahidi
(\cite{zahidi-poly} and \cite{zahidi-thesis}).
In characteristic $p$, it is known for $\FF{q}[Z]$ and for $K[Z]$
\autosyncline{71}where $K$ is a recursive algebraic extension of a finite field by the author (\cite{demeyer-ffpoly2}).
The latter ring is not recursively stable,
so the equivalence is between diophantine sets
and sets which are r.e.\ for every recursive presentation.
All these results use the fact that r.e.\ sets are diophantine for $\Z$.
\autosyncline{76}This paper is no exception,
however we base ourselves on Denef's result for $\Z[Z]$.

\subsection{Definitions}\label{sec-definitions}

\autosyncline{83\\nfpoly}We quickly recall the definitions of recursively enumerable sets,
recursive rings and diophantine sets.
For more information, we refer to the introductory texts
\cite{poonen-notices} and \cite{pheidas-zahidi}.

\begin{define*}
\autosyncline{89}Let $\calS$ be a subset of $\N^k$.
Then $\calS$ is called \emph{recursively enumerable} (r.e.)\ if
there exists an algorithm which prints out elements of $\calS$ as it runs,
such that all elements of $\calS$ are eventually printed at least once.
Since $\calS$ can be infinite, this algorithm is allowed to run infinitely long
\autosyncline{94}and use an unbounded amount of memory.
\end{define*}

\autosyncline{97}Since there are only countably many algorithms but uncountably many
subsets of $\N^k$, there certainly exist sets which are not recursively enumerable.
There also exist sets which are recursively enumerable but whose complement is not.
Finite unions, finite intersections, cartesian products and projections $\N^{k+r} \to \N^k$
of recursively enumerable sets are still recursively enumerable.

\begin{define*}
\autosyncline{104}Let $\calR$ be a countable ring.
Then $\calR$ is called a \emph{recursive ring} if
there exists a bijection $\theta: \calR \to \N$
such that the sets
$$
	\set{(\theta(X), \theta(Y), \theta(X+Y)) \in \N^3}{X,Y \in \calR}
	\text{ and }
	\set{(\theta(X), \theta(Y), \theta(XY)) \in \N^3}{X,Y \in \calR}
$$ \autosyncline{112}are recursively enumerable.
In this case, $\theta$ is called a \emph{recursive presentation} of $\calR$.
A recursive ring $\calR$ is called \emph{recursively stable}
if for any two recursive presentations $\theta_1$ and $\theta_2$,
the set $\set{(\theta_1(X), \theta_2(X)) \in \N^2}{X \in \calR}$ is recursively enumerable.
\end{define*}

\autosyncline{119}The intuition of a recursive ring is a ring in which we can effectively compute,
it is a ring whose elements can be represented by a computer.
The recursive presentation $\theta$ gives every element of $\calR$ a ``code'',
such that, given the codes of $X$ and $Y$, we can compute the code of $X+Y$ and of $XY$.
If we have two different recursive presentations $\theta_1$ and $\theta_2$,
\autosyncline{124}then an element $X$ of $\calR$ has two ``codes'' $\theta_1(X)$ and $\theta_2(X)$.
A ring is recursively stable if and only if
$\theta_2(X)$ can be effectively computed from $\theta_1(X)$.

\begin{define*}
\autosyncline{130}Let $\calR$ be a recursively stable ring
with a recursive presentation $\theta: \calR \to \N$.
Then a subset $\calS \subseteq \calR^k$ is called \emph{recursively enumerable}
if and only if $\theta^{\otimes k}(\calS)$ is an r.e.\ subset of $\N^k$.
\end{define*}

\autosyncline{136}Intuitively, we can still think of r.e.\ subsets of $\calR^k$
as sets which can be printed by an algorithm (possibly running infinitely long).
The requirement that $\calR$ is recursively stable
implies that the definition of r.e.\ subsets of $\calR^k$
does not depend on the choice of $\theta$.
\autosyncline{141}One can prove (see \cite{froh-shep}) that every
field which is finitely generated over its prime field is recursively stable.
Furthermore, a recursive integral domain with a recursively stable fraction field
is automatically recursively stable.
Since we assumed that $\calR$ was recursive we have that $\calR$
\autosyncline{146}is recursively stable, hence $\calR[Z]$ is recursively stable.
To construct an example of a ring which is not recursive,
consider any non-r.e.\ subset $\calS$ of $\N$.
Now take the localization of $\Z$ where the $n$-th prime number
is inverted if and only if $n \in \calS$.
\autosyncline{151}This is a non-recursive subring of $\Q$.

\begin{define*}
\autosyncline{154}Let $\calR$ be an integral domain and $\calS$ a subset of $\calR^k$.
Then $\calS$ is called \emph{diophantine}
if there exists a polynomial $p(a_1, \dots, a_k, x_1, \dots, x_n)$ with coefficients in $\calR$
such that
\begin{equation}\label{define-dioph}
	\calS = \set{(a_1, \dots, a_k) \in \calR^k}{p(a_1, \dots, a_k, x_1, \dots, x_n) = 0 \text{ for some } x_1, \dots, x_n \in \calR}
.\end{equation}
\autosyncline{161}The polynomial $p$ is called a \emph{diophantine definition} of $\calS$.
A function $f: \calR^m \to \calR^n$ is called \emph{diophantine}
if the set $\set{(\vec{X}, f(\vec{X})) \in \calR^{m+n}}{\vec{X} \in \calR^m}$ is diophantine.
\end{define*}

\autosyncline{166}When dealing with decidability questions (analogues of Hilbert's Tenth Problem)
it often makes sense to restrict the coefficients of the polynomial $p$ to a subring of $\calR$.
This is certainly necessary if $\calR$ is uncountable.
However, if we want to prove that r.e.\ sets are diophantine,
then every singleton in $\calR$ needs to be diophantine.
\autosyncline{171}Therefore, we might as well assume that we take all of $\calR$ as ring of coefficients.

\autosyncline{173}If $\calR$ is a recursively stable ring,
then every diophantine set is recursively enumerable.
To see this, consider a diophantine set $\calS$ defined as in \eqref{define-dioph}.
Construct an algorithm which tries all possible values
$(a_1, \dots, a_k, x_1, \dots, x_n) \in \calR^{k+n}$ and evaluates $p(a_1, \dots, a_k, x_1, \dots, x_n)$.
\autosyncline{178}Whenever is zero is found, it prints $(a_1, \dots, a_k)$.
This algorithm will print exactly the set $\calS$.

\subsection{Overview}\label{sec-overview}

\autosyncline{185\\nfpoly}Let $K$ be a number field
and let $\calR$ be a subring of $K$ with fraction field $K$.
\autosyncline{191}In order to prove that r.e.\ sets are diophantine for $\calR[Z]$,
the main result is the following from section~\ref{sec-intpol}:
\begin{theorem*}
Let $\calR$ be a noetherian integral domain of characteristic zero
such that the degree function $\calR[Z] \setminus \{0\} \to \Z$ is diophantine.
\autosyncline{196}Then $\Z[Z]$ is a diophantine subset of $\calR[Z]$.
\end{theorem*}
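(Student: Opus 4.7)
The goal is a diophantine characterization of $\Z[Z]\subseteq\calR[Z]$. My strategy is to encode the coefficient sequence of $P$ through a base-$N$ expansion of an auxiliary integer and then express this encoding by a single polynomial identity.

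I would first exploit the diophantineness of the degree function together with $\mathrm{char}\,\calR=0$. The canonical embedding $\Z\hookrightarrow\calR$ identifies $\N$ with the image of $\deg$, so $\N$, then $\Z=\N\cup(-\N)$, and then $\calR$ itself (constants of degree $\le 0$, plus $0$) are all diophantine subsets of $\calR[Z]$. Evaluation $P\mapsto P(a)$ at $a\in\calR$ is diophantine via the remainder theorem $P(Z)-b=(Z-a)Q(Z)$, and combining this with Matiyasevich's theorem, applied inside the diophantine copy of $\Z\subseteq\calR[Z]$, shows that every r.e.\ relation on $\Z$ is diophantine over $\calR[Z]$. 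In particular, base-$N$ digit extraction is a diophantine function.

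I would then characterize $P\in\Z[Z]$ of degree $d$ by the existence of integers $M,N>0$ with $N>2M+1$, a non-negative integer $A<N^{d+1}$, and an auxiliary polynomial $S\in\calR[Z]$, satisfying: (a) $A=P(N)+M(N^{d+1}-1)/(N-1)$; (b) every base-$N$ digit of $A$ lies in $[0,2M]$; and (c) a single polynomial identity linking $S$, $P$, and the digits of $A$ that forces, for each $i$, the $i$-th coefficient of $P$ to equal the $i$-th base-$N$ digit of $A$ minus $M$. The witness $S$ would be a stride-$M$ block encoding of the tail polynomials $P_0=P$, $P_{i+1}=(P_i-P_i(0))/Z$; the identity could take the form $S(Z^M-Z)=Z^M C(Z)-ZP(Z)$, with $C\in\calR[Z]$ further constrained to be a polynomial in $Z^M$ of degree at most $dM$ whose coefficients at stride $M$ are the shifted digits, a constraint detectable diophantinely since $M>d$ is existentially quantified.

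The main obstacle will be ensuring that this single polynomial identity does force coefficient-by-coefficient equality rather than agreement only at selected evaluation points. I would verify this by reducing the identity to the nested recursion $P=c_0+Z(c_1+Z(c_2+\cdots))$, using that the stride $M>d$ leaves enough room for $S$ to represent the recursion faithfully, and using the diophantineness of $\deg$ to bound uniformly the degrees of $S$ and $C$ in terms of $d$ and to quantify over $d$.
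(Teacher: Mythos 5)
Your first two paragraphs are sound: the image of $\deg$ gives a diophantine copy of $\N$ and hence of $\Z$ inside $\calR[Z]$, evaluation is diophantine via the remainder theorem, and relativizing Matiyasevich's theorem to that copy of $\Z$ makes every r.e.\ relation on $\Z$ (in particular base-$N$ digit extraction) diophantine over $\calR[Z]$. The gap is in step (c), exactly where you flag "the main obstacle", and your proposed resolution does not address it. The only conditions tying $P$ to the integer $A$ are the single evaluation $P(N)\in\Z$ from (a) and the identity $S\cdot(Z^M-Z)=Z^M C-ZP$. Cancelling $Z$, the identity says $P\equiv Z^{M-1}C\equiv C\pmod{Z^{M-1}-1}$; if $C=\sum_j c_j Z^{jM}$ and $\deg P\le d<M-1$, this forces $p_j=c_j$ for all $j$ --- but the $c_j$ are a priori arbitrary elements of $\calR$, so the identity merely transfers the problem from the coefficients of $P$ to the coefficients of $C$. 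The clause "whose coefficients at stride $M$ are the shifted digits" is precisely the coefficient-integrality statement you are trying to define, and no diophantine expression for it is given; since the number of coefficients is $d+1$ with $d$ existentially quantified, you cannot simply list them, and "$M>d$ is existentially quantified" does not produce one. Nor can the evaluation condition close the gap: already over $\calR=\Z[\sqrt2\,]$ the polynomial $P=\sqrt2\,(Z-N)$ satisfies $P(N)=0\in\Z$ while having no integer coefficients. What is missing is a mechanism forcing the coefficients to be algebraic integers bounded at every archimedean place; only then does a single integer evaluation at a sufficiently large point pin the polynomial into $\Z[Z]$ (by pigeonhole via the product formula), and for arbitrary $P$ over a general noetherian domain of characteristic zero no such bound exists.

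The paper supplies exactly this missing ingredient by not attacking arbitrary $P$ directly. It first defines, via a Pell equation over $\calR[Z]$ (Lemma~\ref{cyclo-div3}, Proposition~\ref{cyclo-div}), the divisors of $Z^u-1$. Such a divisor with constant term $\pm1$ automatically has coefficients that are sums of at most $\binom{d}{i}$ roots of unity, hence algebraic integers of archimedean absolute value at most $2^{d-1}$, and Proposition~\ref{div-large} shows that the single diophantine condition $F(2^{\deg F}+1)\in\Z$ then forces $F\in\Z[Z]$. This yields the set $\calC$ of root-of-unity polynomials, and all of $\Z[Z]$ is recovered from $\calC$ by $Z$-adic density of $\calC$ in $\Z[[Z]]^*$ (Proposition~\ref{cyclo-approx}) together with Euclidean division by elements of $\calC+1$ and addition of an integer constant. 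If you wish to keep your base-$N$ coding, you would need to interpose an analogous integrality-and-boundedness step; as written, the argument does not go through.
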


\autosyncline{199}To prove this, we first show that the set of polynomials in $\calR[Z]$
which divide some $Z^u - 1$ is diophantine.
This is done using a Pell equation,
similarly to the definition of powers of $Z$ in \cite{demeyer-ffpoly}, Section 4.
A polynomial $F$ dividing $Z^u - 1$, normalised such that $F(0) = 1$,
\autosyncline{204}has coefficients in $\Z$ 
if and only if $F(h) \in \Z$ for a sufficiently large number $h$
(depending only on the degree of $F$).
In this way, we diophantinely define the polynomials in $\Z[Z]$ dividing some $Z^u - 1$.
We call these the \emph{root-of-unity polynomials}.
\autosyncline{209}This set is $Z$-adically dense in $\Z[[Z]]^*$,
which allows us to diophantinely define all of $\Z[Z]$ in $\calR[Z]$.

\autosyncline{212}Once we have a diophantine definition of $\Z[Z]$,
it follows from \cite{denef-zt} that r.e.\ subsets of $\Z[Z]^k$ are diophantine over $\calR[Z]$.
\autosyncline{217}From this, the main result for $\calR[Z]$ easily follows.

\autosyncline{219}At several points in the proof above we need a diophantine definition
of the degree function $\deg: \calR[Z] \setminus \{0\} \to \Z$.
We give such a diophantine definition in section~\ref{sec-degree}.
We apply a result by Kim and Roush who showed in \cite{kim-roush-padic}
that diophantine equations over $L(Z)$ are undecidable
\autosyncline{224}if $L$ is contained in a finite extension of $\Q_p$ for some $p \geq 3$.
They showed undecidability
by giving a diophantine definition of the discrete valuation ring $L[Z]_{(Z)}$.
Since ``negative degree'' is a discrete valuation,
the same method gives a diophantine definition of ``degree'' in $\calR[Z]$.

\section{Special polynomials}\label{sec-special}

\autosyncline{243\\nfpoly}In this section, we state some properties of the Chebyshev polynomials $\chebX_n$ and $\chebY_n$
and cyclotomic polynomials $\Phi_n$.
We also define root-of-unity polynomials.
Everything in this section concerns only the ring $\Z[Z]$.

\subsection{Chebyshev polynomials}\label{sec-cheb}

\begin{define}\label{cheb}
\autosyncline{253\\nfpoly}Let $n \in \Z$ and define polynomials $\chebX_n, \chebY_n \in \Z[Z]$ using
the following equality:
\begin{equation}\label{define-cheb}
	(Z + \sqrt{Z^2-1})^n = \chebX_n(Z) + \sqrt{Z^2-1} \; \chebY_n(Z)
.\end{equation}
\autosyncline{258}Since $(Z + \sqrt{Z^2-1})\inv = (Z - \sqrt{Z^2-1})$, this definition makes sense for negative $n$.
\end{define}

\autosyncline{261}The degree of $\chebX_n$ is $|n|$; the degree of $\chebY_n$ is $|n|-1$ for $n \neq 0$,
while $\chebY_0 = 0$.

\autosyncline{264}In the literature, $\chebX_n$ is called the $n$-th Chebyshev polynomial of the first kind
and $\chebY_{n+1}$ is called the $n$-th Chebyshev polynomial of the second kind
(such that the $n$-th Chebyshev polynomials have degree $n$ for $n \geq 0$).

\autosyncline{268}The couples $(\chebX_n, \chebY_n)$ satisfy the Pell equation $X^2 - (Z^2-1) Y^2 = 1$.
Conversely, we have:
\begin{prop}\label{cheb-pell}
Let $\calR$ be an integral domain of characteristic zero
and $T$ a non-constant polynomial in $\calR[Z]$.
\autosyncline{273}If $X$ and $Y$ in $\calR[Z]$ satisfy $X^2 - (T^2-1) Y^2 = 1$,
then $X = \pm \chebX_n(T)$ and $Y = \chebY_n(T)$ for some $n \in \Z$.
\end{prop}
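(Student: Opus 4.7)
The plan is to work in the ring $\calR[Z][s]$ with $s^2 = T^2 - 1$, so that the Pell equation says $v := X + Ys$ is a norm-$1$ unit (with conjugate $\bar v := X - Ys$). Setting $u := T + s$, the identity $u(T-s) = T^2 - s^2 = 1$ shows $u$ itself is a norm-$1$ unit, and $u^n = \chebX_n(T) + \chebY_n(T)s$ for every $n \in \Z$. The goal is to prove the group of norm-$1$ units equals $\{\pm u^n : n \in \Z\}$, which translates directly to the conclusion (the sign of $Y$ is absorbed via $\chebX_{-n} = \chebX_n$ and $\chebY_{-n} = -\chebY_n$).

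To control a descent on $\deg X$, I would embed $\calR[Z][s] \hookrightarrow L := K((1/Z))$, where $K := \mathrm{Frac}(\calR)$, by expanding $s = T\sqrt{1-1/T^2} = T - 1/(2T) - 1/(8T^3) - \cdots$; the binomial series makes sense in $L$ because $T$ is non-constant and $\calR$ has characteristic zero. Let $v_\infty$ be the usual valuation with $v_\infty(Z) = -1$; then $v_\infty(T) = -t$ where $t := \deg T$, and in particular $v_\infty(u) = -t$ and $v_\infty(T-s) = +t$. For any Pell solution with $Y \neq 0$, a degree comparison in the Pell equation gives $d := \deg X \geq t$ and $\deg Y = d - t$; looking at the leading coefficients gives $a = \epsilon b c$ for some $\epsilon \in \{\pm 1\}$ (with $a,b,c$ the leading coefficients of $X,Y,T$). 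A direct computation of the leading Laurent term of $v$ then yields $v_\infty(v) = -\epsilon d$: for $\epsilon = 1$ the $Z^d$-terms of $X$ and $Ys$ reinforce, giving $v_\infty(v) = -d$; for $\epsilon = -1$ they cancel, and the relation $v\bar v = 1$, combined with the analogous computation $v_\infty(\bar v) = -d$, forces $v_\infty(v) = +d$.

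The descent is then immediate: replace $v$ by $v(T-s)$ when $\epsilon = 1$, or by $vu$ when $\epsilon = -1$. In both cases the result $v^\sharp$ lies in $\calR[Z][s]$, has norm $1$, and satisfies $|v_\infty(v^\sharp)| = d - t$; applying the same leading-term analysis yields $\deg X^\sharp = d - t < d$, and induction on $\deg X$ closes the argument. The base case $\deg X = 0$ forces $(T^2-1)Y^2 = X^2 - 1$ to be constant, so $Y = 0$ and $X = \pm 1 = \pm u^0$, since $T$ is non-constant and $\calR$ is a characteristic-zero integral domain. The main technical obstacle is confirming that $v_\infty(v) \in \{-d, +d\}$ and never an intermediate value, without which $v_\infty$ would not reliably track the descent; this rigidity comes from the Pell equation forcing $v_\infty(X) = v_\infty(Ys) = -d$ in $L$, so that only perfect cancellation of the leading terms can lift $v_\infty(v)$ above $-d$, with the lifted value then pinned to $+d$ by the norm-$1$ relation.
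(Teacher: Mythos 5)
Your argument is correct, but note that the paper does not actually prove this proposition: its ``proof'' is a citation to \cite{denef-ffpoly}, Lemma~2.1, plus the remark that $\chebX_{-n}=\chebX_n$ and $\chebY_{-n}=-\chebY_n$ absorb the sign on $Y$. What you have written is essentially Denef's descent, made self-contained: multiply $v=X+Ys$ by $T\mp s$ to strictly decrease $\deg X$, the Pell relation forcing the leading terms of $X$ and $Ys$ either to reinforce (giving $v_\infty(v)=-d$) or to cancel completely (whereupon $v\bar v=1$ pins $v_\infty(v)=+d$). Packaging the leading-coefficient bookkeeping through the valuation $v_\infty$ on $K((1/Z))$, with $s$ realized by the binomial series for $T\sqrt{1-1/T^2}$, is a clean way to get the rigidity $v_\infty(v)\in\{-d,+d\}$ and the termination of the descent; Denef does the same computation directly on leading coefficients. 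Two small points deserve a line each: (i) to pass from $v=\pm u^n$ back to the componentwise conclusion you need $\{1,s\}$ to be linearly independent over $K(Z)$, i.e.\ $T^2-1$ is not a square there --- immediate, since $T^2-1=C^2$ would give $(T-C)(T+C)=1$, forcing $T-C$ and $T+C$, hence $T$, to be constant; (ii) in the descent step the case $Y^\sharp=0$ should be noted separately (then $X^\sharp=\pm1$ and $\deg X^\sharp=0<d$), since the leading-term analysis presupposes $Y^\sharp\neq0$. Neither is a gap; the proof is complete in substance and matches the cited argument in spirit.
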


\begin{proof}
\autosyncline{278}See \cite{denef-ffpoly}, Lemma~2.1.
Since $\chebX_{-n} = \chebX_n$ and $\chebY_{-n} = -\chebY_n$,
we do not need to put $\pm$ in front of $\chebY_n(T)$.
\end{proof}

\autosyncline{283}The Chebyshev polynomials also satisfy the following identity:
\begin{prop}\label{cheb-power}
In $\Q(Z)$, the following equality holds for all $n \in \Z$:
\begin{equation}\label{cheb-power-eq}
    Z^n = \chebX_n\left(\frac{Z + Z\inv}{2}\right)
	    + \frac{Z - Z\inv}{2} \chebY_n\left(\frac{Z + Z\inv}{2}\right)
.\end{equation}
\end{prop}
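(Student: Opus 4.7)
The plan is to substitute $W = (Z + Z^{-1})/2$ directly into the defining equation \eqref{define-cheb} of the Chebyshev polynomials. The only real content is computing the square root $\sqrt{W^2 - 1}$ in $\Q(Z)$.

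First I would compute
\[
W^2 - 1 \;=\; \frac{(Z + Z\inv)^2}{4} - 1 \;=\; \frac{Z^2 - 2 + Z\iinv}{4} \;=\; \left(\frac{Z - Z\inv}{2}\right)^{\!2},
\]
so a square root of $W^2 - 1$ in $\Q(Z)$ is $(Z - Z\inv)/2$. With this choice,
\[
W + \sqrt{W^2 - 1} \;=\; \frac{Z + Z\inv}{2} + \frac{Z - Z\inv}{2} \;=\; Z.
\]

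Now Definition~\ref{cheb} is an identity in the ring $\Z[W][\sqrt{W^2-1}]$ valid for every integer $n$ (the case $n<0$ being covered by the observation in Definition~\ref{cheb} that $(W + \sqrt{W^2-1})\inv = W - \sqrt{W^2-1}$). Applying the ring homomorphism $\Z[W][\sqrt{W^2-1}] \to \Q(Z)$ which sends $W \mapsto (Z+Z\inv)/2$ and $\sqrt{W^2-1} \mapsto (Z-Z\inv)/2$ (well defined by the square-root computation above), the left-hand side of \eqref{define-cheb} becomes $Z^n$ and the right-hand side becomes exactly the expression in \eqref{cheb-power-eq}.

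There is no real obstacle here; the only point that requires a tiny bit of care is the sign of the square root and the validity for negative $n$, both of which are handled automatically by the preceding remark in Definition~\ref{cheb}.
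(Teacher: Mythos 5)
Your proposal is correct and is exactly the computation the paper has in mind: its proof consists of the single line ``This easily follows from \eqref{define-cheb}'', and the substitution $W = (Z+Z\inv)/2$ with $\sqrt{W^2-1} = (Z-Z\inv)/2$, giving $W + \sqrt{W^2-1} = Z$, is the intended (and only) content. Nothing to add.
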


\begin{proof}
\autosyncline{293}This easily follows from \eqref{define-cheb}.
\end{proof}

\subsection{Cyclotomic and root-of-unity polynomials}\label{sec-cyclo}

\autosyncline{300\\nfpoly}Let $\Phi_n \in \Z[Z]$ denote the $n$-th cyclotomic polynomial.

\begin{prop}\label{cyclo-term}
\autosyncline{303}Let $n \geq 2$ and write $n = \prod_{i=1}^k p_i^{e_i}$,
where the $p_i$'s are distinct primes and every $e_i \geq 1$.
Let $d := \prod_{i=1}^k p_i^{e_i-1}$.
Then
$$
	\Phi_n(Z) \equiv 1 + (-1)^{k+1} Z^d \pmod{Z^{2d}}
.$$
\end{prop}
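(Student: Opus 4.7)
The plan is to reduce to the squarefree case by a clean substitution, then handle that case by induction on the number of prime factors. Setting $m := \rad(n) = p_1 \cdots p_k$ so that $n = m d$, I would invoke the standard cyclotomic identity
\[
  \Phi_n(Z) \;=\; \Phi_m(Z^d),
\]
valid whenever every prime dividing $n$ also divides $m$, which is precisely our situation. If we can show $\Phi_m(W) \equiv 1 + (-1)^{k+1} W \pmod{W^2}$, then the substitution $W = Z^d$ yields the claimed congruence $\Phi_n(Z) \equiv 1 + (-1)^{k+1} Z^d \pmod{Z^{2d}}$ at once. This reduces everything to the squarefree case $d = 1$.

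For squarefree $m = p_1 \cdots p_k$ I would induct on $k$ using the identity $\Phi_m(Z^p) = \Phi_m(Z) \, \Phi_{mp}(Z)$, valid for any prime $p$ not dividing $m$. The base case $k = 1$ is immediate from $\Phi_p(Z) = 1 + Z + \cdots + Z^{p-1} \equiv 1 + Z \pmod{Z^2}$. For the inductive step, note that $\Phi_m(0) = 1$ for $m \geq 2$ (as $\Phi_m$ divides $1 + Z + \cdots + Z^{m-1}$) and $p \geq 2$, so $\Phi_m(Z^p) \equiv 1 \pmod{Z^2}$. Combined with the induction hypothesis $\Phi_m(Z) \equiv 1 + (-1)^{k+1} Z \pmod{Z^2}$, the factorization $\Phi_{mp}(Z) \cdot \Phi_m(Z) = \Phi_m(Z^p)$ gives
\[
  \Phi_{mp}(Z) \;\equiv\; \bigl(1 + (-1)^{k+1} Z\bigr)^{-1} \;\equiv\; 1 - (-1)^{k+1} Z \;=\; 1 + (-1)^{k+2} Z \pmod{Z^2},
\]
which is exactly the desired formula with $k+1$ prime factors.

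The only non-trivial ingredient is the reduction $\Phi_n(Z) = \Phi_m(Z^{n/m})$, which is a standard fact that would simply need to be quoted (it follows from comparing degrees and observing that every root $\alpha$ of the right-hand side satisfies $\alpha^n = 1$ with exact order $n$). Beyond that, the computation is purely mechanical modulo $Z^2$, and I do not foresee any real obstacle.
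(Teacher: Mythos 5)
Your proof is correct, but it follows a genuinely different route from the paper. The paper works directly from the M\"obius product formula: it writes $\Phi_n(Z) = \prod_{a\mid n}(Z^{n/a}-1)^{\mu(a)}$, uses $\sum_{a\mid n}\mu(a)=0$ to flip each factor to $(1-Z^{n/a})^{\mu(a)}$, and then observes that modulo $Z^{2d}$ every factor is $1$ except the one with $a = n/d = \rad(n)$, giving $\Phi_n(Z) \equiv (1-Z^d)^{\mu(n/d)} = (1-Z^d)^{(-1)^k} \pmod{Z^{2d}}$ in a single step. You instead quote the substitution identity $\Phi_n(Z)=\Phi_{\rad(n)}(Z^{d})$ to reduce to the squarefree case, and then induct on $k$ via $\Phi_m(Z^p)=\Phi_m(Z)\,\Phi_{mp}(Z)$ for $p\nmid m$. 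Both arguments are sound and rest on standard cyclotomic identities; the paper's has the advantage of being a one-shot computation from a single formula and of exhibiting explicitly which divisor contributes the $Z^d$ term, while yours avoids M\"obius inversion entirely at the cost of two quoted identities and an induction. One small imprecision: your parenthetical justification that $\Phi_m(0)=1$ because $\Phi_m$ divides $1+Z+\cdots+Z^{m-1}$ only yields $\Phi_m(0)=\pm 1$; this does not matter, since your induction hypothesis $\Phi_m(Z)\equiv 1+(-1)^{k+1}Z \pmod{Z^2}$ already contains $\Phi_m(0)=1$, but the aside as written is not quite a proof of the fact it claims.
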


\begin{proof}
\autosyncline{313}If $\mu$ denotes the M\"obius function, then we have
$$
	\Phi_n(Z) = \prod_{a|n} (Z^{n/a} - 1)^{\mu(a)}
.$$
Since $n \geq 2$, we have $\sum_{a|n} \mu(a) = 0$
\autosyncline{318}and we can multiply by $1 = \prod_{a|n} (-1)^{\mu(a)}$
to get:
$$
	\Phi_n(Z) = \prod_{a|n} (1 - Z^{n/a})^{\mu(a)}
.$$
\autosyncline{323}Now we evaluate this product modulo $Z^{2d}$.

\autosyncline{325}If $n/a \geq 2d$ then $(1 - Z^{n/a})^{\mu(a)}$ is congruent to $1 \pmod{Z^{2d}}$.
The same happens if $a$ is not squarefree since in this case $\mu(a) = 0$.
The only squarefree $a$ dividing $n$ such that $n/a < 2d$ equals $a = n/d$.
So we have
$$
	\Phi_n(Z) \equiv (1 - Z^d)^{\mu(n/d)} \pmod{Z^{2d}}
.$$
\autosyncline{332}If $k$ is even, then $\mu(n/d) = 1$ and we have the desired result.
If $k$ is odd, then $\mu(n/d) = -1$ and we have
$(1 - Z^d)\inv = (1 + Z^d)(1 - Z^{2d})\inv \equiv 1 + Z^d \pmod{Z^{2d}}$.
\end{proof}

\begin{cor}\label{cyclo-construct}
\autosyncline{338}Let $d \in \N$ and $s \in \{-1,1\}$.
Then there exist infinitely many $n \in \N$ such that
$$
	\Phi_n(Z) \equiv 1 + s Z^d \pmod{Z^{2d}}
.$$
\end{cor}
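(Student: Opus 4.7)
The plan is to reverse Proposition~\ref{cyclo-term}: given $d$ and $s$, construct infinitely many $n \geq 2$ whose prime factorisation $n = \prod_{i=1}^k p_i^{e_i}$ satisfies $\prod_{i=1}^k p_i^{e_i-1} = d$ and $(-1)^{k+1} = s$. Each such $n$ then satisfies the stated congruence by Proposition~\ref{cyclo-term}.

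Write $d = q_1^{f_1} \cdots q_r^{f_r}$ with distinct primes $q_j$ and $f_j \geq 1$ (with $r = 0$ when $d = 1$), and put $n_0 := q_1^{f_1+1} \cdots q_r^{f_r+1}$. For any squarefree positive integer $m$ coprime to $n_0$, the integer $n := n_0 \cdot m$ factors as a product of prime powers in which each $q_j$ appears with exponent $f_j + 1$ and each prime dividing $m$ appears with exponent $1$. Hence $\prod_{i=1}^k p_i^{e_i-1} = q_1^{f_1} \cdots q_r^{f_r} = d$ automatically, while the total number of distinct prime factors is $k = r + \omega(m)$, where $\omega(m)$ denotes the number of prime divisors of $m$.

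The remaining requirement $(-1)^{r + \omega(m) + 1} = s$ fixes the parity of $\omega(m)$. Since there are infinitely many primes not dividing $n_0$, one may choose $\omega(m)$ freely subject to this parity, large enough that $n \geq 2$; in the corner case $d = 1$, $s = -1$, one is forced to take $\omega(m) \geq 2$ (e.g.\ $n = p_1 p_2$ for distinct primes $p_1, p_2$). For each valid value of $\omega(m)$ there are infinitely many choices of $m$, producing infinitely many $n$ as required.

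I foresee no real obstacle: the substantive content lives in Proposition~\ref{cyclo-term}, and the corollary reduces to a parity count on the number of distinct prime factors of $n$, with only a mild edge case when $d = 1$.
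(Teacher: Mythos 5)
Your proof is correct and follows essentially the same route as the paper: both factor $d$, multiply each prime's exponent up by one to get a base integer, and then adjoin a squarefree coprime factor whose number of prime divisors is chosen with the parity forced by $s$, invoking Proposition~\ref{cyclo-term}. You simply spell out the parity bookkeeping and the $d=1$, $s=-1$ corner case more explicitly than the paper does.
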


\begin{proof}
\autosyncline{346}Write $d := \prod_{i=1}^k p_i^{e_i}$
and let $m := \prod_{i=1}^k p_i^{e_i+1}$.
If $r$ is any squarefree number coprime to $m$,
then it follows from Proposition~\ref{cyclo-term}
that $\Phi_{rm}(Z)$ is congruent to $1 \pm Z^d \pmod{Z^{2d}}$,
\autosyncline{351}where the sign of $Z^d$ is determined by the parity of the number of factors in $rm$.
Now the statement clearly follows.
\end{proof}

\begin{define}
\autosyncline{357}We call a polynomial $F \in \Z[Z]$ a \emph{root-of-unity polynomial}
if it satisfies one of the following three equivalent conditions:
\begin{enumerate}\setlength{\itemsep}{0em}
\item $F$ is a divisor of $Z^u - 1$ for some $u > 0$.
\item $F$ or $-F$ is a product of distinct cyclotomic polynomials.
\item $F(0) = \pm 1$, $F$ \autosyncline{362}is squarefree and all the zeros of $F$ are roots of unity.
\end{enumerate}
\end{define}

\autosyncline{366}Let $\calC$ denote the set of all root-of-unity polynomials,
and let $\calC^{+}$ denote those with constant term equal to $1$.

\begin{prop}\label{cyclo-approx}
\autosyncline{371}Let $F \in \Z[Z]$ with $F(0) \in \{-1,1\}$, and let $d \in \Z_{>0}$.
Then there exists a polynomial $M \in \calC$ such that $F \equiv M \pmod{Z^d}$.
\end{prop}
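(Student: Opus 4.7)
The plan is to construct $M$ one coefficient at a time, using Corollary \ref{cyclo-construct} as the tool for correcting the next low-order coefficient. The base case $d = 1$ is immediate: $M := F(0) \in \{-1,1\}$ is the empty product of cyclotomic polynomials (up to sign) and thus lies in $\calC$.

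For the inductive step, suppose I have produced $M_j \in \calC$ with $F \equiv M_j \pmod{Z^{j+1}}$ for some $0 \le j < d-1$, and let $c \in \Z$ be the coefficient of $Z^{j+1}$ in $F - M_j$. If $c = 0$, set $M_{j+1} := M_j$. Otherwise, set $s := \sgn(c) \cdot M_j(0) \in \{-1,1\}$ and invoke Corollary \ref{cyclo-construct} (with $d$ there replaced by $j+1$) to obtain infinitely many $n \in \N$ for which $\Phi_n \equiv 1 + s Z^{j+1} \pmod{Z^{2(j+1)}}$. Pick $|c|$ such indices $n_1, \dots, n_{|c|}$, mutually distinct and distinct from the indices of the cyclotomic factors already appearing in $M_j$ (possible since only finitely many indices are excluded), and define $M_{j+1} := M_j \cdot \Phi_{n_1} \cdots \Phi_{n_{|c|}}$.

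Two verifications will then be routine. First, $M_{j+1}$ is still $\pm$ a product of \emph{distinct} cyclotomic polynomials, hence lies in $\calC$, precisely because the freshly inserted $\Phi_{n_i}$ are disjoint from the existing factors of $M_j$. Second, since $2(j+1) \ge j+2$ for $j \ge 0$, all cross-terms obtained by expanding $\prod_i (1 + s Z^{j+1})$ involve $Z^{2(j+1)}$ or higher and vanish modulo $Z^{j+2}$, leaving $\prod_i \Phi_{n_i} \equiv 1 + |c| s Z^{j+1} \pmod{Z^{j+2}}$. Multiplying by $M_j$, using $M_j(0)^2 = 1$ together with the choice of $s$, gives an added contribution of exactly $c \cdot Z^{j+1}$, so $F \equiv M_{j+1} \pmod{Z^{j+2}}$. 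After $d - 1$ such steps I reach the required $M := M_{d-1}$.

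The only place where anything could potentially go wrong is maintaining the distinctness constraint across the many induction stages; this is precisely why the ``infinitely many'' clause in Corollary \ref{cyclo-construct} is needed, and once it is available the construction proceeds without obstacle.
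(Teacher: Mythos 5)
Your proof is correct and follows essentially the same route as the paper's: induction on the $Z$-adic precision, using Corollary~\ref{cyclo-construct} to multiply in fresh cyclotomic factors $\Phi_n \equiv 1 + sZ^{j+1}$ that correct the next coefficient. The only cosmetic differences are that the paper first normalizes to $F(0)=1$ and adjusts one unit of $c$ per multiplication, whereas you absorb the sign into $s := \sgn(c)\cdot M_j(0)$ and batch all $|c|$ factors at once; both are valid.
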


\autosyncline{375}If we are working in the $Z$-adic topology,
then ``$F \equiv M \pmod{Z^d}$'' means that $M$ is an approximation of $F$
with a precision of $Z^d$.
Since the units of $\Z[[Z]]$ are exactly the power series $F$ with $F(0) = \pm 1$,
the proposition can be rephrased as follows:
\emph{the set of root-of-unity polynomials is $Z$-adically dense in $\Z[[Z]]^*$.}

\begin{proof}
\autosyncline{383}Since the set $\calC$ is invariant under changing sign,
we may assume without loss of generality that $F(0) = 1$.

\autosyncline{386}The proof will be done by induction on $d$,
which means that we will construct better and better approximations of $F$.
For $d = 1$, we can take $M = 1$.
Now let $d \geq 1$ and assume that $F \equiv M_0 \pmod{Z^d}$, where $M_0 \in \calC$.
Then $F - M_0 \equiv c Z^d \pmod{Z^{d+1}}$ for some $c \in \Z$.
\autosyncline{391}If $c$ happens to be zero, then we can take $M = M_0$.

\autosyncline{393}First consider the case $c > 0$.
By Corollary~\ref{cyclo-construct}, we can find an $n_1 \in \N$
such that $\Phi_{n_1}(Z) \equiv 1 + Z^d \pmod{Z^{2d}}$
and such that $\Phi_{n_1}(Z)$ is not a factor of $M_0$.
Let $M_1 := M_0 \Phi_{n_1}(Z)$.
\autosyncline{398}Since $M_0(0) = 1$, we get
$$
	F - M_1 \equiv F - M_0(1 + Z^d) \equiv (F - M_0) - M_0 Z^d \equiv (c-1) Z^d \pmod{Z^{d+1}}
.$$
We can iterate this procedure.
\autosyncline{403}Set $M_2 := M_1 \Phi_{n_2}(Z)$ for a $\Phi_{n_2}$ which is congruent to $1 + Z^d \pmod{Z^{2d}}$,
then $F - M_2 \equiv (c-2) Z^d \pmod{Z^{d+1}}$.
After $c$ steps, we have $F - M_c \equiv 0 \pmod{Z^{d+1}}$.
So we can take $M := M_c$.

\autosyncline{408}The case $c < 0$ is analogous,
the only difference is that we need to multiply
with polynomials which are congruent to $1 - Z^d \pmod{Z^{d+1}}$.
\end{proof}

\section{Defining polynomials with integer coefficients}\label{sec-intpol}

\autosyncline{417\\nfpoly}Throughout this section,
$\calR$ is a noetherian integral domain of characteristic zero
such that the degree function $\calR[Z] \setminus \{0\} \to \Z$ is diophantine.
If $\calR$ is a subring of a number field,
it is a noetherian integral domain of characteristic zero
\autosyncline{422}and in section~\ref{sec-degree} we will show that ``degree'' is diophantine for such $\calR[Z]$.
When we say that ``degree'' is diophantine, we actually mean that the composition
$\calR[Z] \setminus \{0\} \to \Z \into \calR[Z]$ is diophantine.
This makes sense since the set $\Z$ is diophantine in $\calR[Z]$
(see \cite{shlapentokh-poly}, Theorem~5.1).

\autosyncline{428}In this section, we show that $\Z[Z]$ is a diophantine subset of $\calR[Z]$.
This is done in three steps:
first we diophantinely define all divisors of some $Z^u - 1$ in $\calR[Z]$.
Second, we restrict these to the polynomials which have integer coefficients,
i.e.\ the root-of-unity polynomials.
\autosyncline{433}Third, we use Proposition~\ref{cyclo-approx} to get all of $\Z[Z]$ in $\calR[Z]$.

\subsection{Divisors of $Z^u - 1$}

\autosyncline{439\\nfpoly}We give a diophantine definition of the divisors of $Z^u - 1$,
without requiring that they have coefficients in $\Z$.
For technical reasons, we first restrict ourselves to polynomials of degree at least $3$.

\begin{lemma}\label{cyclo-div3}
\autosyncline{444}For $G \in \calR[Z]$ with $\deg(G) \geq 3$, we have
\begin{align}
	(\exists u > 0)&(G \mid Z^u - 1 ~\wedge~ G(0) = 1) \label{cyclo-div3-top}\\
	&\Updownarrow \notag\\
	(\exists S,X,Y)(&X^2 - \left({\left(\textstyle\frac{Z + S}{2}\right)^2} - 1\right)Y^2 = 1 ~\wedge~ X \equiv 1 \pmod{Z + S - 2} \label{cyclo-div3-pell}\\
		\wedge~ &Y \neq 0 ~\wedge~ G = 1 - Z S ~\wedge~ X + \left(\textstyle\frac{Z - S}{2}\right) Y \equiv 1 \pmod{G}) \label{cyclo-div3-pow}
\end{align}
\end{lemma}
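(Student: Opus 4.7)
I would prove the equivalence by establishing the two directions separately, with Propositions~\ref{cheb-pell} and~\ref{cheb-power} as the main tools.

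\emph{Upward direction} ($\Uparrow$). Assume $S, X, Y$ satisfy the right-hand conditions, and let $K$ be the fraction field of $\calR$. From $G = 1 - ZS$ we get $G(0) = 1$, so $Z$ is invertible modulo $G$ with $Z\inv \equiv S \pmod{G}$. Because $\deg G \geq 3$, the polynomial $T := (Z+S)/2$ is non-constant in $K[Z]$, and so Proposition~\ref{cheb-pell} applied over $K$ forces $X = \pm\chebX_n(T)$ and $Y = \chebY_n(T)$ for some $n \in \Z$. The congruence $X \equiv 1 \pmod{Z+S-2}$ combined with $Z+S-2 = 2(T-1)$ and $\chebX_n(1) = 1$ eliminates the minus sign. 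Next, $Y \neq 0$ and $\chebY_0 = 0$ force $n \neq 0$; replacing $n$ by $-n$ if necessary (which only flips the sign of $Y$) I may take $n > 0$. Working modulo $G$ one has $(Z+S)/2 \equiv (Z + Z\inv)/2$ and $(Z-S)/2 \equiv (Z - Z\inv)/2$, so Proposition~\ref{cheb-power} yields
\[
X + \tfrac{Z-S}{2}\,Y \;\equiv\; Z^n \pmod{G}.
\]
The final hypothesis then gives $G \mid Z^n - 1$, and $u := n$ works.

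\emph{Downward direction} ($\Downarrow$). Given $G \mid Z^u - 1$ with $G(0) = 1$, set $S := (1-G)/Z \in \calR[Z]$, so that $G = 1 - ZS$ and $\deg S = \deg G - 1 \geq 2$. Put $T := (Z+S)/2$ and propose $X := \chebX_u(T)$, $Y := \chebY_u(T)$. The Pell equation holds by Definition~\ref{cheb}; the divisibility $Z+S-2 \mid X-1$ reduces to $(T-1) \mid \chebX_u(T)-1$ and follows from $\chebX_u(1) = 1$; and $Y \neq 0$ because $\chebY_u \neq 0$ for $u > 0$ with $T$ non-constant. The last congruence is verified exactly as in the upward direction, using $S \equiv Z\inv \pmod{G}$, $Z^u \equiv 1 \pmod{G}$, and Proposition~\ref{cheb-power}.

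\emph{Main obstacle}. The delicate point, which I expect to dominate the write-up, is that $T = (Z+S)/2$ lies a priori only in $K[Z]$, so the Pell equation must be read as the denominator-free polynomial identity $(2X)^2 - ((Z+S)^2 - 4) Y^2 = 4$ in $\calR[Z]$, with the two congruences interpreted analogously. That the constructed $X$ and $Y$ really end up in $\calR[Z]$ rests on the Lucas- and Fibonacci-type identities showing that $2\chebX_n((Z+S)/2)$ and $\chebY_n((Z+S)/2)$ are integer polynomials in the variable $Z+S$, hence automatically lie in $\calR[Z]$. Keeping this bookkeeping clean throughout each step is where the bulk of the technical work will sit.
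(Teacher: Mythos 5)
Your main line of argument coincides with the paper's proof: both directions are pushed through Proposition~\ref{cheb-pell} (with the sign of $X$ pinned down by $\chebX_n(1)=1$ and $n\neq 0$ forced by $Y\neq 0$), and then the substitution $S\equiv Z\inv\pmod{G}$ together with Proposition~\ref{cheb-power} converts the last congruence into $Z^n\equiv 1\pmod{G}$. The paper writes this as a single chain of equivalences and handles the sign of $n$ by multiplying $Z^n\equiv 1$ through by $Z^{-n}$; your ``replace $n$ by $-n$'' phrasing is slightly off (that changes $Y$, since $\chebY_{-n}=-\chebY_n$), but the conclusion $G\mid Z^{|n|}-1$ you are after is the same and correct.

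The one place where you go beyond the paper --- whose proof is entirely silent on the denominators --- is also where your write-up has a gap. The identities you invoke show that $2\chebX_n\bigl(\tfrac{Z+S}{2}\bigr)$ and $\chebY_n\bigl(\tfrac{Z+S}{2}\bigr)$ lie in $\Z[Z+S]\subseteq\calR[Z]$; they do \emph{not} show that $\chebX_n\bigl(\tfrac{Z+S}{2}\bigr)$ itself does. Indeed it does not when $2$ is not a unit of $\calR$: in the downward direction $G\mid Z^u-1$ forces the leading coefficient of $G$, hence of $Z+S$, to be a unit $\eps$ of $\calR$, so $\chebX_n\bigl(\tfrac{Z+S}{2}\bigr)$ has leading coefficient $\eps^n/2\notin\calR$ for every $n\geq 1$ (already for $\calR=\Z$ and $n=1$ it equals $(Z+S)/2$). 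Thus under the reading you propose --- clear denominators in the equation but keep $X$ as the quantified unknown --- the downward direction still fails to produce a witness $X\in\calR[Z]$. The repair is to rescale the unknown, not merely the equation: quantify over $\tilde X=2X$ and write the three conditions as $\tilde X^2-\bigl((Z+S)^2-4\bigr)Y^2=4$, $\tilde X\equiv 2\pmod{Z+S-2}$ and $\tilde X+(Z-S)Y\equiv 2\pmod{G}$, after which your Lucas/Fibonacci observation does supply integral witnesses. (This is as much a defect of the lemma as printed as of your proof; you noticed the issue where the paper did not, but your fix stops one step short.)
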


\begin{proof}
\autosyncline{454}The formula $(\exists S)(G = 1 - Z S)$ is equivalent to $G(0) = 1$.
Since $\deg(G) \geq 3$ and $G = 1 - Z S$, it follows that $\deg(S) \geq 2$.
Therefore $Z + S$ is non-constant.
By Proposition \ref{cheb-pell}, the first part of formula \eqref{cyclo-div3-pell} is equivalent to
$$
	X = \pm \chebX_n\!\left(\textstyle\frac{Z + S}{2}\right)
	\text{ and } 
	Y = \chebY_n\!\left(\textstyle\frac{Z + S}{2}\right)
	\text{ for some $n \in \Z$}
.$$
\autosyncline{464}Since $X_n(1) = 1$, the condition $X \equiv 1 \pmod{Z + S - 2}$
forces the sign of $X$ to be positive.
The formula $Y \neq 0$ is equivalent to $n \neq 0$.

\autosyncline{468}In the last part of formula \eqref{cyclo-div3-pow},
we are working modulo $G = 1 - Z S$.
But this means that $S \equiv Z\inv \pmod{G}$.
So, that formula becomes equivalent to
$$
	\chebX_n\!\left(\textstyle\frac{Z + Z\inv}{2}\right)
	+ \left(\textstyle\frac{Z - Z\inv}{2}\right)
	  \chebY_n\!\left(\textstyle\frac{Z + Z\inv}{2}\right)
	\equiv 1 \pmod{G}
.$$
\autosyncline{478}Using Proposition~\ref{cheb-power}, this is equivalent to $Z^n \equiv 1 \pmod{G}$.
Without loss of generality, we may assume that $n \geq 0$
(otherwise multiply both sides by $Z^{-n}$).
Then we can rewrite $Z^n \equiv 1 \pmod{G}$ as $G \mid Z^n - 1$.
\end{proof}

\begin{prop}\label{cyclo-div}
\autosyncline{485}In $\calR[Z]$, the set of all polynomials dividing $Z^u - 1$ for some $u > 0$ is diophantine.
\end{prop}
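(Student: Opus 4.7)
The plan is to reduce everything to Lemma~\ref{cyclo-div3}. I would introduce the set
\[
    D := \{\, F \in \calR[Z] : \deg F \geq 3,\ F(0) = 1,\ \text{and } F \mid Z^u - 1 \text{ for some } u > 0 \,\}.
\]
The condition $\deg F \geq 3$ is diophantine by the running hypothesis on $\calR$; the condition $F(0) = 1$ is diophantine because it is equivalent to $\exists K \in \calR[Z] : F - 1 = ZK$; and Lemma~\ref{cyclo-div3} provides a diophantine definition of the divisibility condition under the degree assumption. So $D$ is a diophantine subset of $\calR[Z]$.

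The core step will be to show that an arbitrary $G \in \calR[Z]$ divides some $Z^u - 1$ if and only if
\[
    \exists H \in \calR[Z] : \ GH \in D.
\]
The right-to-left direction is immediate: from $G \mid GH$ and $GH \mid Z^v - 1$ we obtain $G \mid Z^v - 1$. For the left-to-right direction I would argue constructively. Given $G \mid Z^u - 1$, pass to $u' := 3u$. Then $u' \geq 3$, and since $Z^u - 1 \mid Z^{u'} - 1$ we still have $G \mid Z^{u'} - 1$. Set $H := -(Z^{u'} - 1)/G \in \calR[Z]$. Then $GH = 1 - Z^{u'}$ has degree $u' \geq 3$ and constant term $1$, and it divides $Z^{u'} - 1$ with quotient $-1 \in \calR$, so $GH \in D$.

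Since ``$\exists H \in \calR[Z] : GH \in D$'' is manifestly diophantine in $G$, this will complete the argument.

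I do not expect any genuine obstacle at this step: Lemma~\ref{cyclo-div3} has already absorbed the serious Pell-equation content, and the reduction above is a purely formal device. Its only job is to extend the lemma's coverage by multiplying $G$ up into the regime where the lemma applies directly, so as to handle the cases that the lemma says nothing about --- the units of $\calR$, the factor $Z - 1$, and any other divisors of small degree or with $G(0) \neq 1$.
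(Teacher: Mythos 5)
Your proof is correct and takes essentially the same route as the paper: both reduce to Lemma~\ref{cyclo-div3} by multiplying $F$ up to a multiple of degree at least $3$ with constant term $1$ that still divides some $Z^v - 1$ (the paper takes $G = \lcm(Z^3-1, F)$ and forces $\deg G \geq 3$ via the condition $(Z^3-1) \mid G$ rather than via the diophantine degree function, whereas you take the cofactor $-(Z^{3u}-1)/G$). The one point you gloss over is that the formula of Lemma~\ref{cyclo-div3} must still be converted into an actual diophantine definition --- in particular the subformula $Y \neq 0$ is diophantine only because $\calR[Z]$ is noetherian (the paper cites \cite{mb-nonnul}, Th\'eor\`eme~3.1 for this) --- but that is a routine verification, not a gap in your reduction.
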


\begin{proof}
\autosyncline{489}Let $F$ be an element of $\calR[Z]$.
We claim that $F$ divides some $Z^u - 1$ if and only if
\begin{equation}\label{cyclo-div-eq}
	(\exists G)\big(F \mid G ~\wedge~ (Z^3 - 1) \mid G ~\wedge~ (\exists u > 0)(G \mid Z^u - 1 ~\wedge~ G(0) = 1)\big)
.\end{equation}

\autosyncline{495}If formula \eqref{cyclo-div-eq} is satisfied, then $F \mid G \mid Z^u - 1$.
Conversely, if $F \mid Z^u - 1$, we can set $G = \lcm(Z^3 - 1, F)$.
Then $G$ will divide $Z^{3u} - 1$.
Since $F$ divides $Z^u - 1$, its constant coefficient must be a unit,
therefore $G$ can be chosen to have $G(0) = 1$.

\autosyncline{501}Applying Lemma~\ref{cyclo-div3}, we see that \eqref{cyclo-div-eq} is diophantine.
Indeed, a congruence $A \equiv B \pmod{C}$ can be written as $(\exists X)(A - B = C X)$.
The formula $Y \neq 0$ is diophantine using the fact that $\calR[Z]$ is noetherian
(see \cite{mb-nonnul}, Th\'eor\`eme~3.1).
Hence, formulas \eqref{cyclo-div3-pell}--\eqref{cyclo-div3-pow} are diophantine.
\autosyncline{506}We can apply Lemma~\ref{cyclo-div3} because the $G$ appearing in \eqref{cyclo-div-eq}
must have degree $\geq 3$.
\end{proof}

\subsection{Root-of-unity polynomials}

\autosyncline{514\\nfpoly}Now we have a diophantine definition of the divisors of $Z^u - 1$,
but we only want those divisors with integer coefficients.
We take care of this using the following proposition,
which was inspired by \cite{denef-zt} and \cite{zahidi-poly}.

\begin{prop}\label{div-large}
\autosyncline{520}Let $K$ be a number field and $\calO$ its ring of integers.
Let $F \in \calO[Z]$ be a polynomial satisfying $F(0) \in \{-1,1\}$
whose zeros (over an algebraic closure) are all roots of unity.
If $F(2^{\deg F} + 1)$ is an integer, then every coefficient of $F$ is an integer.
\end{prop}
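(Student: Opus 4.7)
The plan is to show that every Galois automorphism $\sigma \in \gal(L/\Q)$ fixes all coefficients of $F$, where $L$ denotes the Galois closure of $K$ over $\Q$. Once this is established, each coefficient of $F$ is $\gal(L/\Q)$-invariant and therefore lies in $\calO \cap \Q = \Z$.

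As a preliminary I would prove a uniform bound on the coefficients. Factor $F(Z) = a_n \prod_{i=1}^n (Z - \zeta_i)$ over $\bar{\Q}$, with the $\zeta_i$ roots of unity. From $F(0) = (-1)^n a_n \prod_i \zeta_i \in \{\pm 1\}$ and the fact that $\prod_i \zeta_i$ is again a root of unity, $a_n$ itself must be a root of unity. Consequently $|\sigma(a_n)| = 1$ for every embedding $\sigma$ of $L$ into $\C$, and since $|\sigma(\zeta_i)| = 1$ as well, the elementary-symmetric expansion $a_k = a_n (-1)^{n-k} e_{n-k}(\zeta_1, \ldots, \zeta_n)$ yields the bound $|\sigma(a_k)| \le \binom{n}{k}$ for every $k$ and every $\sigma$.

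The main step: fix an arbitrary $\sigma \in \gal(L/\Q)$. Since $F(M) \in \Z$ with $M := 2^n + 1$, applying $\sigma$ gives $\sigma(F(M)) = F(M)$, that is, $\sum_{k=0}^n b_k M^k = 0$ with $b_k := a_k - \sigma(a_k) \in \calO_L$ and $|\tau(b_k)| \le 2\binom{n}{k}$ for every embedding $\tau$ of $L$. Suppose for contradiction some $b_k \neq 0$ and let $j$ be the largest such index; solving the relation above for $b_j$ yields
\[
|\tau(b_j)| \;\le\; 2 M^{-j} \sum_{k=0}^{j-1} \binom{n}{k} M^k \;=\; 2 \sum_{l=1}^{j} \binom{n}{j-l} M^{-l}.
\]
The technical heart of the argument is to verify that this quantity is strictly less than $1$ for every $1 \le j \le n$ and every $n \ge 1$ (with $M = 2^n + 1$). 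Bounding each binomial by $\binom{n}{\lfloor n/2 \rfloor}$ and summing the geometric series gives the uniform upper estimate $\binom{n}{\lfloor n/2 \rfloor}/2^{n-1}$, which is $< 1$ for $n \ge 3$; the remaining low-degree cases $n = 1, 2$ are checked by direct calculation (the tightest is $n = j = 2$, where the bound equals $22/25$). Hence $|\tau(b_j)| < 1$ in every embedding $\tau$, so the nonzero algebraic integer $b_j$ would have $|N_{L/\Q}(b_j)| < 1$, contradicting the fact that the norm of a nonzero algebraic integer has absolute value at least $1$.

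Therefore $b_k = 0$ for every $k$, so each $a_k$ is fixed by $\gal(L/\Q)$ and hence lies in $\Z$. The main obstacle is precisely the uniform strictness of $|\tau(b_j)| < 1$: the naive geometric-series bound saturates at exactly $1$ in the low-degree cases $n = 1, 2$, so those must be dispatched by hand rather than by the general estimate.
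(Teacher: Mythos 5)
Your proof is correct, but it is organized quite differently from the paper's. The paper fixes $d=\deg F$, introduces the finite set $\calG_d$ of all $G\in\calO[Z]$ of degree $\le d$ with $G(2^d+1)\in\Z$ and all archimedean absolute values of the coefficients at most $2^{d-1}$, proves that evaluation at $h=2^d+1$ is injective on $\calG_d$, and then finishes by a counting argument: $\calG_d\cap\Z[Z]$ already has $(2^d+1)^{d+1}$ elements while $|G(h)|\le (h^{d+1}-1)/2$ forces $|\calG_d|\le h^{d+1}$, so $\calG_d\subseteq\Z[Z]$. You replace the whole counting layer by Galois descent: you apply the same kind of injectivity estimate only to the single pair $F$ and $\sigma(F)$ (which agree at $M=2^n+1$ because $F(M)\in\Q$), conclude $F=\sigma(F)$ for every $\sigma$, and deduce the coefficients lie in $\calO\cap\Q=\Z$. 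Both proofs rest on the same two ingredients --- the bound $|\alpha_i|\le\binom{n}{i}$ at every archimedean place coming from the roots being roots of unity, and the fact that a nonzero algebraic integer cannot have all archimedean conjugates of absolute value $<1$ (you phrase this via the norm, the paper via the product formula) --- so the key inequality is shared, but your global structure is genuinely more direct and avoids the pigeonhole step. One small remark: your case split at $n=1,2$ is an artifact of keeping the sharp bound $2\binom{n}{j-l}$ inside the sum; if you instead use the paper's uniform bound $\binom{n}{k}\le 2^{n-1}$, so that $|\tau(b_k)|\le 2^n=M-1$, the estimate collapses to
\begin{equation*}
|\tau(b_j)|\,M^j \le (M-1)\sum_{k=0}^{j-1}M^k = M^j-1 < M^j
\end{equation*}
uniformly for all $n\ge 1$ and all $j$, with no low-degree exceptions. (Alternatively, your finite geometric sum is already strictly below its infinite limit $1/(M-1)$, which also rescues $n=1,2$ without hand computation.) The paper's version proves the marginally stronger statement that every element of $\calG_d$ is in $\Z[Z]$, but that extra generality is not used elsewhere, so your route is a legitimate and arguably cleaner alternative.
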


\begin{proof}
\autosyncline{527}By changing sign if necessary, we may assume without loss of generality that $F(0) = 1$.
Let $d$ be the degree of $F$ and write
\begin{equation}\label{div-large-Fsum}
	F(Z) = \sum_{i=0}^d \alpha_i Z^i
,\end{equation} where $\alpha_i \in \calO$.
\autosyncline{532}Note that $\alpha_d \neq 0$ and $\alpha_0 = 1$.

\autosyncline{534}If $d = 0$, then $F(Z) = 1$ which is in $\Z[Z]$.
Now assume that $d \geq 1$.
Over an algebraic closure, $F$ can be factored as
\begin{equation}
	F(Z) = \alpha_d (Z - \zeta_1) \dots (Z - \zeta_d)
,\end{equation} \autosyncline{539}where every $\zeta_i$ is a root of unity.
We see that $F(0) = \alpha_d (-1)^d \prod_{i=1}^d \zeta_i$.
This must be equal to $1$, therefore $\alpha_d$ is also a root of unity.
Write $\sigma_{d,i}$ for the $i$-th elementary symmetric polynomial in $d$ variables.
Since $\sigma_{d,i}$ has $\binom{d}{i}$ terms,
\autosyncline{544}it follows that $\alpha_i = \alpha_d \cdot \sigma_{d,i}(\zeta_1, \dots, \zeta_d)$
is the sum of $\binom{d}{i}$ roots of unity.

\autosyncline{547}Let $|\cdot|$ be an archimedean absolute value on $K$
(i.e.\ an absolute value coming from an embedding $K \into \C$).
Then we have $|\alpha_i| \leq \binom{d}{i}$.
Since $\binom{d}{i} \leq 2^{d-1}$ for all $d \geq 1$ and all $i \in \{0, \dots, d\}$,
we have $|\alpha_i| \leq 2^{d-1}$.

\autosyncline{553}Define the set $\calG_d \subseteq \calO[Z]$ consisting of all polynomials $G \in \calO[Z]$
satisfying:
\begin{enumerate}\setlength{\itemsep}{0em}
\item The degree of $G$ is at most $d$.
\item $G(2^d + 1)$ is an integer.
\item $|\gamma_i| \leq 2^{d-1}$ \autosyncline{558}for every coefficient $\gamma_i$ of $G$
	and every archimedean absolute value on $K$.
\end{enumerate}
Clearly, the elements of $\Z[Z]$ having degree at most $d$
and coefficients in the interval $\{-2^{d-1}, \ldots, 2^{d-1}\}$ are in $\calG_d$.
\autosyncline{563}There are $(2^d+1)^{d+1}$ such polynomials.
We claim that these are the only elements of $\calG_d$.
Since $F$ is in $\calG_d$, this claim implies the proposition.

\autosyncline{567}To prove the claim,
take any $G$ in $\calG_d$ and write $G = \sum_{i=0}^d \gamma_i Z^i$
(where we allow $\gamma_d = 0$).
We have the following bound for all $h \in \Z$ with $h > 1$:
$$
	|G(h)|
		\leq \sum_{i=0}^d |\gamma_i| h^i
		\leq 2^{d-1} \frac{h^{d+1} - 1}{h - 1}
.$$
\autosyncline{576}Fix $h := 2^d + 1$ for the remainder of this proof.
Then we have $|G(h)| \leq (h^{d+1} - 1)/2$.

\autosyncline{579}Now take two elements $G \neq H$ in $\calG_d$ and let $D := G - H$.
Write $D(Z) = \sum_{i=0}^e \delta_i Z^i$ with $\delta_e \neq 0$ (clearly, $e \leq d$).
We want to prove that $D(h) \neq 0$, so assume that $D(h) = 0$.
Then
\begin{equation}\label{div-large-deltae}
	\delta_e h^e = -\sum_{i=0}^{e-1} \delta_i h^i
.\end{equation}
\autosyncline{586}The coefficients of $G$ and $H$ have absolute value at most $2^{d-1}$,
therefore $|\delta_i| \leq 2^d$.
Since $\delta_e \in \calO$ is integral over $\Z$,
we have $|\delta_e|_\frakp \leq 1$ for every non-archimedean ($\frakp$-adic) absolute value on $K$.
From the product formula for absolute values it follows that
$|\delta_e| \geq 1$ \autosyncline{591}for some archimedean absolute value on $K$.
If we take such an absolute value,
then \eqref{div-large-deltae} implies the following contradiction:
$$
	h^e \leq |\delta_e h^e| \leq \sum_{i=0}^{e-1} |\delta_i| h^i \leq 2^d \frac{h^e - 1}{h - 1} = h^e - 1
.$$

\autosyncline{598}Consider again the set $\calG_d$.
We just showed that $G(h)$ cannot take the same value for two different elements $G$ in $\calG_d$.
Since $G(h) \in \Z$ by definition of $\calG_d$ and $|G(h)| \leq (h^{d+1} - 1)/2$,
it follows that $\calG_d$ has at most $h^{d+1}$ elements.
But we already know that there are $h^{d+1}$ elements in $\calG_d \cap \Z[Z]$,
\autosyncline{603}therefore $\calG_d \subseteq \Z[Z]$.
\end{proof}

\autosyncline{607}Taking Propositions~\ref{cyclo-div} and \ref{div-large} together, we can now prove:
\begin{prop}\label{dioph-C}
If ``degree'' is diophantine in $\calR[Z]$,
then the set $\calC$ is a diophantine subset of $\calR[Z]$.
\end{prop}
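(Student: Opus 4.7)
The plan is to carve out $\calC$ inside $\calR[Z]$ by combining Proposition~\ref{cyclo-div}, which already handles divisors of $Z^u-1$, with Proposition~\ref{div-large}, which isolates polynomials with integer coefficients. Concretely, I would propose the diophantine definition consisting of three conditions: (a) $F$ divides $Z^u - 1$ for some $u > 0$; (b) $F(0)^2 = 1$; and (c) $F(2^{\deg F}+1) \in \Z$.

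For the forward direction, every $F \in \calC$ is a divisor of some $Z^u-1$ in $\Z[Z] \subseteq \calR[Z]$, has $F(0) \in \{-1,1\}$, and evaluates to an integer at every integer input, so all three conditions hold. For the converse, suppose $F \in \calR[Z]$ satisfies (a), (b), (c). From (a) and the monicity of $Z^u-1$, working in $K := \mathrm{Frac}(\calR)$ and an algebraic closure, the roots of $F$ are $u$-th roots of unity and the leading coefficient $a$ of $F$ is a unit in $\calR$. Writing $F = a \tilde F$ with $\tilde F$ monic, one has $\tilde F \in \calO[Z]$ where $\calO$ is the ring of integers of the number field generated by the roots. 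The constant term $\tilde F(0)$ is (up to sign) a product of roots of unity, hence itself a root of unity; combined with $F(0) = a \tilde F(0) = \pm 1$, this forces $a$ itself to be a root of unity and in particular $a \in \calO$. Thus $F \in \calO[Z]$, its zeros are all roots of unity, $F(0) \in \{-1,1\}$, and $F(2^{\deg F}+1) \in \Z$, so Proposition~\ref{div-large} applies and yields $F \in \Z[Z]$. Together with (a), this places $F$ in $\calC$.

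The remaining task is to verify that each of (a), (b), (c) is diophantine. Condition (a) is Proposition~\ref{cyclo-div}; (b) is just a polynomial equation in the constant coefficient of $F$. For (c) I would combine three ingredients: the hypothesis that the degree $\calR[Z] \setminus \{0\} \to \Z$ is diophantine, the Matiyasevich-style diophantineness of the graph of $n \mapsto 2^n$ over $\Z$, and the diophantineness of $\Z$ in $\calR[Z]$ (by \cite{shlapentokh-poly}, Theorem~5.1). The main subtlety will be the step in the converse that promotes a polynomial $F \in \calR[Z]$ satisfying (a) and (b) to an element of $\calO[Z]$, since this is precisely what makes Proposition~\ref{div-large} applicable; the remaining pieces are essentially bookkeeping.
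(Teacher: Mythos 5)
Your proposal is correct and follows essentially the same route as the paper: combine Proposition~\ref{cyclo-div} (divisors of $Z^u-1$), the diophantine condition $F(0)=\pm 1$, and the integrality test $F(2^{\deg F}+1)\in\Z$ expressed via the diophantineness of degree and of $\Z$ in $\calR[Z]$, then invoke Proposition~\ref{div-large} with $K$ a cyclotomic field. The only difference is that you spell out the verification that such an $F$ lies in $\calO[Z]$ (via the leading coefficient being a root of unity), a step the paper leaves implicit.
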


\begin{proof}
\autosyncline{614}The $\calR[Z]$-divisors of $Z^u - 1$ are diophantine by Proposition~\ref{cyclo-div}.
If we take only those polynomials with $F(0) = \pm 1$,
they satisfy the conditions of Proposition~\ref{div-large}
with $K = \Q(\zeta_u)$ where $\zeta_u$ is a primitive $u$-th root of unity.
Note that $F(0) \in \{-1, 1\}$ is equivalent to $Z \mid F^2 - 1$, a diophantine condition.
\autosyncline{619}The formula
\begin{equation}\label{cyclo-div-deg}
	(\exists t \in \Z)\big(F \equiv t \pmod{Z - 2^{\deg(F)} - 1}\big)
\end{equation}
expresses that $F$ evaluated at $2^{\deg(F)} + 1$ is an integer.
\autosyncline{624}Since $\Z$ is diophantine in $\calR[Z]$ (see \cite{shlapentokh-poly}, Theorem~5.1)
and ``degree'' is diophantine by assumption,
formula \eqref{cyclo-div-deg} is diophantine.
\end{proof}

\subsection{All polynomials with integer coefficients}

\autosyncline{633\\nfpoly}Proposition~\ref{dioph-C} gives
us a diophantine definition of $\calC$, which is a subset of $\Z[Z]$.
To define all of $\Z[Z]$ in $\calR[Z]$, we use Proposition~\ref{cyclo-approx}.
By taking remainders of the elements of $\calC$ after Euclidean division by $Z^d$,
we get all elements of $\Z[Z]$ with constant coefficient $1$ or $-1$.
\autosyncline{638}We don't actually need that the set of powers of $Z$ is diophantine,
we can divide by elements of $\calC + 1$, which contains the powers of $Z$.
In order for Euclidean division to be diophantine,
we need ``degree'' to be diophantine.
To get all elements of $\Z[Z]$,
\autosyncline{643}we just need to add an integer to the polynomials we get as remainders.

\begin{theorem}\label{dioph-ZZ}
\autosyncline{646}Let $\calR$ be a noetherian integral domain of characteristic zero such that ``degree'' is diophantine.
Then $\Z[Z]$ is a diophantine subset of $\calR[Z]$.
\end{theorem}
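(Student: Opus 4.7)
The plan is to build a diophantine definition of $\Z[Z]$ from the diophantine definition of $\calC$ (Proposition~\ref{dioph-C}) by combining it with Proposition~\ref{cyclo-approx}, which says that $\calC$ is $Z$-adically dense in $\Z[[Z]]^*$. Concretely, I claim that for any $F \in \calR[Z]$,
\begin{equation*}
F \in \Z[Z] \iff (\exists n \in \Z)(\exists M, N \in \calC)(\exists Q \in \calR[Z])\bigl(F - n = M + (N+1) Q \ \text{and}\ \deg(F - n) < \deg(N + 1)\bigr).
\end{equation*}
Each ingredient of the right-hand side is diophantine: $\Z$ is diophantine in $\calR[Z]$ by \cite{shlapentokh-poly}, Theorem~5.1; $\calC$ is diophantine by Proposition~\ref{dioph-C}; the polynomial equation introduces only an existential variable $Q$; and the strict degree inequality is diophantine because the degree function $\calR[Z] \setminus \{0\} \to \Z$ is diophantine by hypothesis and $<$ is diophantine on $\Z$.

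For the forward direction, given $F \in \Z[Z]$, I set $n := F(0) - 1 \in \Z$ so that $G := F - n$ satisfies $G(0) = 1$ and $G \neq 0$. Pick any integer $u > \deg G$ and put $N := Z^u - 1 \in \calC$, so $N + 1 = Z^u$. Proposition~\ref{cyclo-approx} applied to $G$ with precision $Z^u$ yields $M \in \calC$ with $G \equiv M \pmod{Z^u}$; equivalently, $G - M = Z^u Q$ for some $Q \in \Z[Z] \subseteq \calR[Z]$. The degree comparison $\deg(F - n) = \deg G < u = \deg(N + 1)$ holds by choice of $u$.

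For the reverse direction, any $N \in \calC$ is, up to a global sign, a product of distinct cyclotomic polynomials, hence $N \in \Z[Z]$ with leading coefficient $\pm 1$. The assumption $\deg(F - n) < \deg(N + 1)$ forces both sides to be nonzero and $\deg(N + 1) \geq 1$, so $N + 1 \in \Z[Z]$ has positive degree and leading coefficient $\pm 1$, a unit in $\Z$ and in $\calR$. Euclidean division of $M$ by $N + 1$ inside $\Z[Z]$ gives $M = (N+1) A + R$ with $A, R \in \Z[Z]$ and $\deg R < \deg(N + 1)$. Then
\begin{equation*}
F - n = M + (N+1) Q = (N+1)(A + Q) + R,
\end{equation*}
and uniqueness of Euclidean division in $\calR[Z]$, together with $\deg(F - n) < \deg(N + 1)$, forces $A + Q = 0$ and $F - n = R \in \Z[Z]$. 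Hence $F = n + R \in \Z[Z]$.

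The main subtlety is handling the degenerate shapes $N = \pm 1$ (where $N + 1$ is $0$ or a nonzero constant) inside a single formula; these are ruled out automatically by the strict degree inequality. A second point worth stressing is that we divide by $N + 1$ with $N \in \calC$ rather than by pure powers of $Z$: this avoids having to give a separate diophantine definition of $\{Z^u : u \geq 0\}$, while still letting the forward construction use $Z^u = (Z^u - 1) + 1$ with $Z^u - 1 \in \calC$.
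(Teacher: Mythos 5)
Your proposal is correct and follows essentially the same route as the paper: define $\calC$ diophantinely, use Proposition~\ref{cyclo-approx} to approximate $F - n$ modulo $(N+1)$ with $N \in \calC$ (so that powers of $Z$ need not be defined separately), and recover $F - n$ as the remainder of a Euclidean division by a polynomial with unit leading coefficient. The only cosmetic differences are that you rule out degenerate $N$ via the strict degree inequality where the paper imposes $(Z-1)\mid D$, and that your choice $n = F(0)-1$ makes the paper's ``$R=0$'' disjunct unnecessary.
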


\begin{proof}
\autosyncline{651}Let $X$ be an element of $\calR[Z]$.
We claim that $X$ is in $\Z[Z]$ if and only if
\begin{align}
	(\exists M,D,Q,R,C)(
		& M \in \calC ~\wedge~ D \in \calC ~\wedge~ (Z-1) \mid D \label{def-ZZ-MD}\\
		\wedge~ & M = Q (D+1) + R ~\wedge~ (R = 0 ~\vee~ \deg(R) < \deg(D)) \label{def-ZZ-R}\\
		\wedge~ & C \in \Z ~\wedge~ X = R + C) \label{def-ZZ-C}
.\end{align}

\autosyncline{660}Assume that $X$ is indeed in $\Z[Z]$.
Then set $C := X(0) - 1$ and $R := X - C$ such that $R(0) = 1$.
Let $D := Z^{\deg(R) + 1} - 1$.
Apply Proposition~\ref{cyclo-approx} to find an $M \in \calC$ such that $R \equiv M \pmod{D+1}$
and let $Q := (M - R)/(D + 1)$.
\autosyncline{665}Now it is clear that \eqref{def-ZZ-MD}--\eqref{def-ZZ-C} is satisfied.

\autosyncline{667}Conversely, assume that \eqref{def-ZZ-MD}--\eqref{def-ZZ-C} is satisfied,
we have to show that $X \in \Z[Z]$.
Since $\calC \subseteq \Z[Z]$, we know that $M$ and $D$ are in $\Z[Z]$.
The condition $Z - 1 \mid D$ prevents $D$ from being constant
(note that $0$ is not an element of $\calC$).
\autosyncline{672}Since all elements of $\calC$ are monic up to sign, $D+1$ is also.
Formula \eqref{def-ZZ-R} says that $R$ is the remainder of
the Euclidean division of $M$ by $D+1$, therefore $R \in \Z[Z]$.
Since $C \in \Z$, it also follows that $X \in \Z[Z]$.
\end{proof}

\section{Diophantine definition of degree}\label{sec-degree}

\autosyncline{682\\nfpoly}We start with a lemma which shows that defining the degree function
$\calR[Z] \setminus \{0\} \to \Z$ is equivalent to defining a certain ``weak'' degree equality relation.

\begin{lemma}\label{deg-equal}
\autosyncline{686}Let $\calR$ be an integral domain of characteristic zero.
Let $\delta(F,X)$ be a diophantine relation on $\calR[Z]^2$
such that $\delta(F,X)$ is equivalent to $\deg(F) = \deg(X)$
for all $F \in \calR[Z] \setminus \{0\}$ and $X \in \Z[Z] \setminus \{0\}$.
Then the relation ``$\deg(F) = d$''
\autosyncline{691}between $F \in \calR[Z] \setminus \{0\}$ and $d \in \Z_{\geq 0}$ is diophantine.
\end{lemma}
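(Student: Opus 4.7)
The plan is to produce, for each $d \in \Z_{\geq 0}$, a polynomial in $\Z[Z] \setminus \{0\}$ of degree exactly $d$ by a diophantine formula in $d$, and then compare its degree against that of $F$ using the given relation $\delta$.

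The key ingredient will be the Chebyshev polynomials from Section~\ref{sec-cheb}, together with the identity
\[
    \chebY_n(1) = n \qquad \text{for every } n \in \Z.
\]
This is obtained by expanding \eqref{define-cheb} to first order in $\sqrt{Z^2 - 1} = \sqrt{(Z-1)(Z+1)}$ at $Z = 1$ (equivalently, from the formula $\chebY_n(\cos\theta) = \sin(n\theta)/\sin\theta$ and the limit $\theta \to 0$). Since $\deg \chebY_n = |n| - 1$ for $n \neq 0$, the polynomial $\chebY_{d+1}(Z) \in \Z[Z]$ has degree exactly $d$ for every $d \geq 0$.

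I would then establish, for $F \in \calR[Z] \setminus \{0\}$ and $d \in \Z_{\geq 0}$, the equivalence
\[
    \deg(F) = d \iff (\exists X, Y \in \calR[Z])\bigl(X^2 - (Z^2 - 1) Y^2 = 1 ~\wedge~ (Z - 1) \mid (Y - d - 1) ~\wedge~ \delta(F, Y)\bigr).
\]
By Proposition~\ref{cheb-pell} applied to $T = Z$, which is non-constant in $\calR[Z]$, the Pell equation forces $(X, Y) = (\pm \chebX_n(Z), \chebY_n(Z))$ for some $n \in \Z$. The divisibility condition then reads $Y(1) = d + 1$, which by the key identity pins $n = d + 1$. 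Hence $Y = \chebY_{d+1}(Z)$ lies in $\Z[Z] \setminus \{0\}$ (since $d \geq 0$ implies $n \neq 0$) and has degree $d$, so by hypothesis $\delta(F, Y)$ is equivalent to $\deg F = d$. The converse direction is witnessed by $X = \chebX_{d+1}(Z)$ and $Y = \chebY_{d+1}(Z)$.

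All three conjuncts on the right are diophantine: the Pell equation and the divisibility condition are polynomial (the latter rewritten as $\exists K \in \calR[Z]: Y - d - 1 = (Z-1) K$); the integer $d + 1$ is obtainable because $\Z$ is a diophantine subset of $\calR[Z]$ by \cite{shlapentokh-poly}, Theorem~5.1; and $\delta$ is diophantine by assumption. The argument presents no real obstacle --- it is essentially a one-equation reduction once the identity $\chebY_n(1) = n$ is in hand, which itself is the only ingredient requiring a short verification.
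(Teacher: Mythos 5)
Your proof is correct and follows essentially the same route as the paper: a Pell equation $X^2 - (Z^2-1)Y^2 = 1$ forces $(X,Y) = (\pm\chebX_n(Z), \chebY_n(Z))$, the value at $Z=1$ pins down $n$ via $\chebY_n(1)=n$, and the resulting integer polynomial of degree $d$ is compared to $F$ via $\delta$. The only cosmetic difference is that the paper imposes $Y(1)=d$ and applies $\delta(F,X)$ with $X=\pm\chebX_d$ (degree $d$), whereas you impose $Y(1)=d+1$ and apply $\delta(F,Y)$ with $Y=\chebY_{d+1}$ (also degree $d$); both choices work.
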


\begin{proof}
\autosyncline{695}Let $F \in \calR[Z] \setminus \{0\}$ and let $d \in \Z_{\geq 0}$.
We claim that $F$ has degree $d$ if and only if
\begin{equation}\label{deg-equal-eq}
	(\exists X,Y)(X^2 - (Z^2 - 1)Y^2 = 1 ~\wedge~ Y(1) = d ~\wedge~ \delta(F,X))
.\end{equation}
\autosyncline{700}Since $\delta$ is diophantine and $Y(1) = d$ is equivalent to $Z - 1 \mid Y - d$,
this formula is clearly diophantine.

\autosyncline{703}Assume that \eqref{deg-equal-eq} is satisfied.
Since $\chebY_n(1) = n$ for any $n \in \Z$,
the subformula ``$X^2 - (Z^2 - 1)Y^2 = 1 ~\wedge~ Y(1) = d$'' is equivalent to
``$X = \pm \chebX_d(Z) ~\wedge~ Y = \chebY_d(Z)$'' by Proposition~\ref{cheb-pell}.
In particular, $X$ is an element of $\Z[Z]$ of degree $d$.
\autosyncline{708}By the assumptions on $\delta$, this implies that $\deg(F) = d$.

\autosyncline{710}Conversely, if the degree of $F$ equals $d$,
then we set $X = \chebX_d(Z)$ and $Y = \chebY_d(Z)$.
This satisfies \eqref{deg-equal-eq}.
\end{proof}

\autosyncline{715}As in the Introducion, let $K$ be a number field and
$\calR$ a subring of $K$ with fraction field $K$.

\autosyncline{718}To diophantinely define degree in $\calR[Z]$,
we use the fact that ``negative degree'' is a discrete valuation on $K(Z)$.
More precisely, if $F, G \in \calR[Z]$, then $v_{Z\inv}(F/G) := \deg(G) - \deg(F)$
defines a discrete valuation on $K(Z)$.
Therefore, the problem reduces to showing
\autosyncline{723}that the discrete valuation ring at $Z\inv$ in $K(Z)$ is diophantine.
For this, we need certain quadratic forms used by Kim and Roush (see \cite{kim-roush-padic})
to prove undecidability for rational function fields over so-called $p$-adic fields
with $p$ odd.
This undecidability has been generalised to arbitrary function fields
\autosyncline{728}over $p$-adic fields with $p$ odd (see \cite{mb-ell} or \cite{eisentraeger-padic}).

\begin{define}
\autosyncline{731}Let $p$ be a prime number.
A field $K$ is called \emph{$p$-adic} if $K$
can be embedded in a finite extension of $\Q_p$.
\end{define}

\autosyncline{736}It is clear from this definition that every number field is $p$-adic for every $p$.
For the rest of this section, we fix any odd prime $p$.
Following the method by Kim and Roush,
we need to work over a field satisfying Hypothesis $(\mathcal{H})$.

\begin{define}
\autosyncline{742}Let $L$ be a $p$-adic field with $p$ odd
and let $v_\frakp$ be a discrete valuation on $L$ extending the $p$-adic valuation on $\Q$.
We say that $L$ satisfies Hypothesis $(\mathcal{H})$ if and only if
$L$ contains elements $\alpha$ and $\pi$ such that
\begin{enumerate}\setlength{\itemsep}{0em}
\item $v_\frakp(\pi)$ \autosyncline{747}is odd and $\pi$ is algebraic over $\Q$.
\item $\alpha$ is a root of unity.
\item $L$ contains a square root of $-1$.
\item The quadratic form $\form{1,\alpha} \form{1,\pi}$ is anisotropic
	(i.e.\ has no non-trivial zeros) in the completion $L_\frakp$.
\item \autosyncline{752}The quadratic form $\form{1,\alpha} \form{1,\pi}$ is isotropic
	in all $2$-adic completions of $\Q(\alpha, \pi, \sqrt{-1})$.
\end{enumerate}
\end{define}

\begin{prop}[\cite{kim-roush-padic}, Proposition~8]
\autosyncline{758}Let $K$ be a $p$-adic field for an odd prime $p$.
Then there exists a finite extension $L$ of $K$ which satisfies Hypothesis $(\mathcal{H})$.
\end{prop}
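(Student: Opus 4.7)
The plan is to follow the Kim--Roush construction, since this is exactly Proposition~8 of \cite{kim-roush-padic}. I would build $L$ in stages, at each stage extending $K$ finitely and checking that the growing extension does not destroy previously achieved conditions.

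First I would replace $K$ by $K(\sqrt{-1})$, which is a finite extension and takes care of condition~(3). Fix an embedding of the resulting field into a finite extension of $\Q_p$ and let $v_\frakp$ denote the induced discrete valuation extending the $p$-adic valuation on $\Q$. For (1), I would adjoin to $K$ a root $\pi$ of a polynomial $T^{e}-p$ with $e$ odd; then $\pi$ is algebraic over $\Q$ and $v_\frakp(\pi)$ is odd. For (2), $\alpha$ will be a root of unity, with order to be chosen by the last two conditions.

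Conditions (4) and (5) together are what force the extension to be done carefully, and this is the real content of the proposition. Since $p$ is odd and $v_\frakp(\pi)$ is odd, the $2$-Pfister form $\form{1,\alpha}\form{1,\pi} = \form{1,\alpha,\pi,\alpha\pi}$ is anisotropic over $L_\frakp$ if and only if the Hilbert symbol $(\alpha,\pi)_\frakp = -1$, which (because $\alpha$ is a unit and $\pi$ is a uniformizer up to units) is equivalent to $\alpha$ reducing to a non-square in the residue field of $L_\frakp$. A suitable primitive $(q-1)$-th root of unity, adjoined in an unramified extension of $K$ with residue field $\F_q$, realises this, giving (4).

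The main obstacle is then arranging (5): that the same form becomes isotropic at every $2$-adic completion of the number field $F := \Q(\alpha,\pi,\sqrt{-1})$. Here one exploits the fact that $\sqrt{-1}\in F$, so every $2$-adic completion $F_\frakq$ already contains $\sqrt{-1}$ and many Hilbert symbols are automatically trivial. Kim and Roush arrange the remaining symbols $(\alpha,\pi)_\frakq$ by an approximation-plus-root-of-unity argument: replacing $\alpha$ by an odd power (which preserves (2) and the mod-$\frakp$ non-squareness used for (4)) and, if necessary, adjusting $\pi$ by weak approximation within the algebraic numbers of the correct $\frakp$-valuation, one kills all $2$-adic obstructions simultaneously. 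My plan is to follow this construction essentially verbatim, take one last finite extension of $K$ to contain the finalised $\alpha$ and $\pi$, and then verify conditions (1)--(5) directly in the resulting field $L$.
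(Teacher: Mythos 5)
First, a point of comparison: the paper offers no proof of this statement at all --- it is imported verbatim as Proposition~8 of Kim and Roush \cite{kim-roush-padic} --- so there is no internal argument to measure yours against. Judged on its own, your sketch correctly handles the cheap conditions ($\sqrt{-1}$ for (3), $\alpha$ a root of unity for (2)) and correctly reduces (4) to the statement that, $p$ being odd and $v_\frakp(\pi)$ odd, the form $\form{1,\alpha}\form{1,\pi}$ is anisotropic over $L_\frakp$ exactly when $\alpha$ reduces to a non-square in the residue field. But at the one place where the proposition has real content --- making the same form isotropic at \emph{every} $2$-adic completion of $\Q(\alpha,\pi,\sqrt{-1})$ while preserving the $\frakp$-adic anisotropy --- you write only that Kim and Roush ``arrange the remaining symbols by an approximation-plus-root-of-unity argument'' and that you will ``follow this construction essentially verbatim.'' That is a citation, not a proof; the gap sits precisely on the step the proposition exists to settle.

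The gap is not merely one of exposition, because the stages of your construction are not independent and the later ones can undo the earlier ones. Adjoining further algebraic numbers to kill the $2$-adic obstructions in (5) enlarges $L$, hence possibly the residue field of $L_\frakp$; a primitive $(q-1)$-th root of unity that is a non-square in $\F_q$ becomes a square after an unramified quadratic extension, destroying (4). Likewise, the parity of $v_\frakp(\pi)$ is a property of the valuation on the \emph{final} field $L$ (normalised to have value group $\Z$), so fixing $\pi^e=p$ with $e$ odd at an intermediate stage guarantees nothing once additional ramification is introduced. A complete argument must choose $\alpha$, $\pi$ and the whole tower simultaneously and then verify all five conditions against the finished $L$; that global bookkeeping is exactly what Proposition~8 of \cite{kim-roush-padic} supplies and what is absent from your write-up.
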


\autosyncline{762}The next two propositions deal with certain quadratic forms.
Our variable $Z$ is the inverse of the variable $t$ that Kim and Roush use.

\begin{prop}[\cite{kim-roush-padic}, Proposition~7]\label{qf-aniso}
\autosyncline{766}Let $L$ be any field of characteristic $0$
and suppose that $\form{1,-\alpha} \form{1,\pi}$ is an anisotropic quadratic form over $L$.
Let $F \in L(Z)$ such that $v_{Z\inv}(F)$ is non-negative and even.
Then one of the following two is anisotropic over $L(Z)$:
\begin{gather}
	\form{Z, -\alpha Z, -1, -F} \form{1,\pi} \\
	\form{Z, -\alpha Z, -1, -\alpha F} \form{1,\pi}
.\end{gather}
\end{prop}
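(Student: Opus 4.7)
My plan is to apply Springer's theorem for the discrete valuation $v:=v_{Z\inv}$ on $L(Z)$ --- whose uniformizer is $Z\inv$ and whose residue field is $L$ --- to reduce isotropy of each eight-dimensional form over $L(Z)$ to isotropy of a $2$-Pfister form over $L$, and then derive a contradiction in the Brauer group.

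The first step is to normalize $F$. Since $v(F)=2e$ is even, I would write $F=Z^{-2e}G$ with $v(G)=0$, so $G$ has residue $\bar G\in L^{\times}$. Because $Z^{2e}=(Z^{e})^{2}$ is a square in $L(Z)^{\times}$, $\form{-F}\cong\form{-G}$ and $\form{-\alpha F}\cong\form{-\alpha G}$; similarly $\form{Z}\cong\form{Z\inv}$ by scaling with the square $(Z\inv)^{2}$, and likewise for $-\alpha Z,\pi Z,-\pi\alpha Z$. After these rescalings every entry of either big form has $v$-valuation $0$ or $1$, and the first form becomes
\[
\form{Z,-\alpha Z,-1,-F}\form{1,\pi}\;\cong\;Z\inv\cdot\form{1,-\alpha}\form{1,\pi}\;\perp\;\form{-1,-G}\form{1,\pi}.
\]

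Now suppose for contradiction that both eight-dimensional forms were isotropic over $L(Z)$; then they are also isotropic over the completion $L((Z\inv))$. Springer's theorem applied to the displayed splitting tells us that the isotropy is equivalent to the isotropy, over $L$, of at least one of the residue forms $\form{1,-\alpha}\form{1,\pi}$ or $\form{1,\bar G}\form{1,\pi}$ (scaling by $-1$ preserves isotropy). The first is anisotropic by hypothesis, so $\form{1,\bar G}\form{1,\pi}$ must be isotropic over $L$. The identical analysis of the second form just replaces $G$ by $\alpha G$, so $\form{1,\alpha\bar G}\form{1,\pi}$ is also isotropic. Because a $2$-Pfister form is hyperbolic as soon as it is isotropic, the quaternion classes $(-\bar G,-\pi)$ and $(-\alpha\bar G,-\pi)$ vanish in $\Br(L)[2]$, so bilinearity yields
\[
(\alpha,-\pi)\;=\;(\alpha\bar G^{2},-\pi)\;=\;(-\alpha\bar G,-\pi)+(-\bar G,-\pi)\;=\;0,
\]
which says $\form{1,-\alpha}\form{1,\pi}$ is hyperbolic --- contradicting the hypothesis.

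The main obstacle I expect is the bookkeeping in the Springer reduction: carefully tracking the square absorptions (multiplications by $(Z^{e})^{2}$ or $(Z\inv)^{2}$) used to normalize each entry, and verifying that signs line up so the two residue Pfister forms come out exactly as claimed. Everything else --- Springer's theorem itself, the isotropy--hyperbolicity dichotomy for Pfister forms, and the bilinearity $(ab,c)=(a,c)+(b,c)$ in $\Br(L)[2]$ --- is standard.
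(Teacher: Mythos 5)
The paper gives no proof of this proposition; it is imported verbatim from \cite{kim-roush-padic} (Proposition~7), so there is no in-paper argument to compare against. Your proof is correct and is essentially the standard residue-form argument for that result: after absorbing the squares $Z^{2}$ and $Z^{2e}$ the form splits as $Z\inv\form{1,-\alpha}\form{1,\pi}\perp\form{-1,-G}\form{1,\pi}$ with unit entries for $v_{Z\inv}$, Springer's theorem over $L((Z\inv))$ (to which isotropy over $L(Z)$ passes) plus the anisotropy of $\form{1,-\alpha}\form{1,\pi}$ forces $\form{1,\bar G}\form{1,\pi}$ and $\form{1,\alpha\bar G}\form{1,\pi}$ to be isotropic, hence hyperbolic as $2$-Pfister forms, and bimultiplicativity of the quaternion symbol yields $(\alpha,-\pi)=0$ in $\Br(L)[2]$, contradicting the hypothesis. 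All the bookkeeping you flag as a risk does in fact line up, so the argument stands as a valid reconstruction of the cited proof.
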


\autosyncline{776}The following proposition follows from \cite{kim-roush-padic}.
However, here we use a reformulation by Eisentr\"ager
(see \cite{eisentraeger-padic}, Theorem~8.1).
Note that the condition that $G$ has algebraic coefficients
is missing from Eisentr\"ager's paper, but it is necessary and it does appear in Kim and Roush.

\begin{prop}\label{qf-iso}
\autosyncline{783}Let $L$ be a $p$-adic field satisfying Hypothesis $(\mathcal{H})$
for elements $\alpha$ and $\pi$ in $L$.
Let $\calU \subseteq L(Z)$ such that $\calU \cap \Q$ is dense in $\Q_{p_1} \times \dots \times \Q_{p_m}$
for every finite set of rational primes $\{p_1, \dots, p_m\}$.
Let $G \in L(Z)$ such that $v_Z(G) = -2$ and $v_{Z\inv}(G) = 1$.
\autosyncline{788}Assume that $G = G_N(Z)/G_D(Z)$ for polynomials $G_N$ and $G_D$
with coefficients algebraic over $\Q$.
Then there exist $\gamma_3, \gamma_5 \in \calU$ such that,
if we let
\begin{equation}\label{qff}
	F := (1 + Z\inv)^3 G(Z) + \gamma_3 Z^{-3} + \gamma_5 Z^{-5}
,\end{equation} \autosyncline{794}then the following quadratic forms are both isotropic over $L(Z)$:
\begin{gather}
	\form{Z, \alpha Z, -1, -F} \form{1,\pi} \label{qf1}\\
	\form{Z, \alpha Z, -1, -\alpha F} \form{1,\pi} \label{qf2}
.\end{gather}
\end{prop}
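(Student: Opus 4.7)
The plan is to adapt the Kim-Roush / Eisenträger argument after the change of variable $t = Z\inv$. Both quadratic forms \eqref{qf1} and \eqref{qf2} have the tensor shape $\phi \otimes \form{1,\pi}$ with $\phi$ of rank $4$, so by a standard local-global principle for such Pfister-multiple forms over $L(Z)$ — obtained from residue maps on Witt rings at the closed points of $\mathbb{P}^1_L$ — isotropy over $L(Z)$ reduces to isotropy at a finite set $\Sigma$ of completions, namely those where the coefficients of $\phi$ fail to be units.

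Next I would pin down $\Sigma$. Because $G_N$, $G_D$, $\pi$ and $\alpha$ are all algebraic over $\Q$, and the valuations of $G$ at $Z=0$ and $Z=\infty$ are fixed at $-2$ and $1$ respectively, the only possibly bad places of $L(Z)$ are $v_{Z\inv}$ itself together with finitely many places above the rational primes $2$ and $p$. At the $p$-adic places, condition (iv) of Hypothesis $(\mathcal{H})$, together with $v_{Z\inv}(F)=1$ and Proposition \ref{qf-aniso} applied with the opposite sign on $\alpha$, forces the $\form{1,\pi}$-factor to absorb any obstruction coming from the first four slots. At the $2$-adic places, condition (v) of $(\mathcal{H})$ supplies isotropy of $\form{1,\alpha}\form{1,\pi}$, which extends to the full form provided the coefficients of $F$ avoid a prescribed residue class modulo a bounded power of $2$.

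Those residue conditions are what $\gamma_3$ and $\gamma_5$ are designed to arrange. Rewriting in the Kim-Roush variable $t = Z\inv$ gives $F = (1+t)^3 G + \gamma_3 t^3 + \gamma_5 t^5$, a power series of $t$-order $1$, and the two parameters modify exactly the $t^3$- and $t^5$-coefficients of its Laurent expansion at $t=0$. Computing the Hilbert/Pfister residue of each of the two forms at each place in $\Sigma$ then translates the isotropy conditions into a finite system of congruences on $(\gamma_3,\gamma_5)$ modulo the relevant rational primes. The density assumption on $\calU\cap\Q$ is precisely what is needed to solve this system with $\gamma_3,\gamma_5 \in \calU$.

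The main obstacle I expect is checking that two parameters really suffice: that the two residue conditions per place (one for \eqref{qf1} and one for \eqref{qf2}) are independent as functions of $(\gamma_3,\gamma_5)$, so the combined system is solvable. Hypothesis $(\mathcal{H})$ is calibrated for this — the odd valuation of $\pi$ and the nontriviality of $\form{1,\alpha}$ at the $2$-adic places of $\Q(\alpha,\pi,\sqrt{-1})$ combine to make the map from perturbations to residues surjective onto the required target — but keeping track of the signs, of the distinction between the $\alpha$- and non-$\alpha$ versions of the form, and of the interaction with the leading term $(1+t)^3 G$ will be the bookkeeping-heavy part of the argument.
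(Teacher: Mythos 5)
The paper offers no proof of this proposition: it is imported verbatim (up to the change of variable $t = Z\inv$ and the added hypothesis that $G$ has algebraic coefficients) from Kim and Roush, in Eisentr\"ager's reformulation. So you are attempting something the paper deliberately outsources, and your sketch has to be judged against the actual Kim--Roush argument.

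The central step of your outline does not hold up. You reduce isotropy of the rank-$8$ forms \eqref{qf1} and \eqref{qf2} over $L(Z)$ to isotropy at a finite set of completions via ``a standard local-global principle for Pfister-multiple forms over $L(Z)$, obtained from residue maps on Witt rings at the closed points of $\mathbb{P}^1_L$.'' No such standard principle exists here: the residue maps at closed points of $\mathbb{P}^1_L$ control the Witt class, not isotropy, and the $u$-invariant of $\Q_p(t)$ equals $8$, so rank-$8$ forms over $L(Z)$ are exactly the critical case where anisotropy can occur even when the form is locally isotropic in any naive sense. A genuine local-global principle for isotropy over function fields of $p$-adic curves (Colliot-Th\'el\`ene--Parimala--Suresh, via field patching) is a deep theorem that postdates both Kim--Roush and this paper, and it is not how the cited proof proceeds. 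Kim and Roush instead work globally over the number field $\Q(\alpha,\pi,\sqrt{-1})$, where Hasse--Minkowski applies --- this is precisely where condition (v) of Hypothesis $(\mathcal{H})$ on the $2$-adic completions enters --- and combine this with explicit norm-theoretic manipulations of the Pfister factor $\form{1,\pi}$ and specialization at $t=0$; the density of $\calU\cap\Q$ in finite products $\Q_{p_1}\times\dots\times\Q_{p_m}$ is then used to realize the required rational values of $\gamma_3,\gamma_5$ by simultaneous approximation. A secondary problem: you invoke Proposition~\ref{qf-aniso}, which is an \emph{anisotropy} criterion, ``with the opposite sign on $\alpha$'' to force isotropy at the $p$-adic places; that proposition only ever yields anisotropy conclusions, so this step is backwards as stated. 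Your identification of the roles of $t=Z\inv$, of the $t^3$- and $t^5$-coefficients, and of the density hypothesis is consistent with the source, but without a correct replacement for the local-global reduction the proposal does not constitute a proof.
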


\autosyncline{801}The most natural choice for $\calU$ would be $\calU = L$.
However, for our applications, $\calU$ needs to be diophantine in $L(Z)$.
In the article by Kim and Roush, $\calU$ is a subset of $L$.
However, since enlarging the set $\calU$ only weakens the proposition,
we can even take $\calU$ in $L(Z)$.

\autosyncline{807}Taking these last two propositions together,
we can prove the following:
\begin{prop}\label{qf-iff}
Let $L$ and $\calU$ be as in Proposition~\ref{qf-iso}
with the additional condition that every element $A \in \calU$ has $v_{Z\inv}(A) \geq 0$.
\autosyncline{812}Let $X \in L(Z)$ with algebraic coefficients and define
$$
	G(Z) := \frac{(Z + Z^2) + X^3}{Z^3 + Z^2 X^3}
.$$
Then $v_{Z\inv}(X) \geq 0$ if and only if there exist $\gamma_3, \gamma_5 \in \calU$
\autosyncline{817}such that the quadratic forms \eqref{qf1} and \eqref{qf2} are both isotropic
with $F$ as in \eqref{qff}.
\end{prop}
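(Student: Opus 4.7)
The plan is to reduce both directions to the Kim--Roush propositions~\ref{qf-aniso} and~\ref{qf-iso} by computing the valuations of $G$ at $Z$ and at $Z^{-1}$ from those of $X$. Writing $N := Z + Z^2 + X^3$ and $D := Z^2(Z + X^3)$, I would first check that $v_Z(G) = -2$ unconditionally via a three-case analysis on $v_Z(X)$: if $v_Z(X) < 0$ the $X^3$ terms dominate in both $N$ and $D$, giving $v_Z(N) = 3v_Z(X)$ and $v_Z(D) = 2 + 3v_Z(X)$; if $v_Z(X) = 0$ one has $v_Z(N) = 0$ and $v_Z(D) = 2$; if $v_Z(X) > 0$ one has $v_Z(N) = 1$ and $v_Z(D) = 3$. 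A similar but simpler case split on $v_{Z^{-1}}(X)$ shows that $v_{Z^{-1}}(G) = 1$ when $v_{Z^{-1}}(X) \geq 0$ (in which case $v_{Z^{-1}}(N) = -2$ and $v_{Z^{-1}}(D) = -3$), and $v_{Z^{-1}}(G) = 2$ when $v_{Z^{-1}}(X) \leq -1$ (in which case $X^3$ dominates both numerator and denominator). Since $X$ has algebraic coefficients, so do $N$ and $D$.

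For the forward direction, if $v_{Z^{-1}}(X) \geq 0$ then $G$ satisfies $v_Z(G) = -2$, $v_{Z^{-1}}(G) = 1$ and has algebraic coefficients, which are exactly the hypotheses of Proposition~\ref{qf-iso}. That proposition directly produces $\gamma_3, \gamma_5 \in \calU$ making the forms \eqref{qf1} and \eqref{qf2} isotropic over $L(Z)$.

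For the converse, arguing by contrapositive, suppose $v_{Z^{-1}}(X) < 0$ so that $v_{Z^{-1}}(G) = 2$. For any $\gamma_3, \gamma_5 \in \calU$ (which satisfy $v_{Z^{-1}}(\gamma_i) \geq 0$ by assumption), we have $v_{Z^{-1}}((1+Z^{-1})^3 G) = 2$ while $v_{Z^{-1}}(\gamma_i Z^{-i}) \geq i \geq 3$, so $v_{Z^{-1}}(F) = 2$, which is non-negative and even. Hypothesis~$(\mathcal{H})$ makes $\form{1,\alpha}\form{1,\pi}$ anisotropic over $L$; since $\sqrt{-1} \in L$ the one-dimensional forms $\form{c}$ and $\form{-c}$ are isometric, so $\form{1,-\alpha}\form{1,\pi}$ is also anisotropic. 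Proposition~\ref{qf-aniso} then forces one of $\form{Z,-\alpha Z,-1,-F}\form{1,\pi}$ or $\form{Z,-\alpha Z,-1,-\alpha F}\form{1,\pi}$ to be anisotropic over $L(Z)$, and these are isometric (again using $\sqrt{-1} \in L$) to \eqref{qf1} and \eqref{qf2} respectively, contradicting the assumption that both are isotropic.

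The main technical obstacle is the unconditional identity $v_Z(G) = -2$, whose verification requires working through all three sub-cases of $v_Z(X)$ and keeping track of when the minimum in the non-archimedean triangle inequality is uniquely attained. A secondary, purely cosmetic point is the sign-twist bookkeeping needed to match $\form{Z,-\alpha Z,\ldots}$ in Proposition~\ref{qf-aniso} with $\form{Z,\alpha Z,\ldots}$ in the statement; this is resolved once and for all by the presence of $\sqrt{-1}$ in $L$ coming from Hypothesis~$(\mathcal{H})$.
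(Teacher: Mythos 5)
Your proposal is correct and follows essentially the same route as the paper's own proof: the same splitting $G = G_N/G_D$, the same valuation computations giving $v_Z(G)=-2$ and $v_{Z\inv}(G)=1$ or $2$ according to the sign of $v_{Z\inv}(X)$, the forward direction via Proposition~\ref{qf-iso}, and the contrapositive via $v_{Z\inv}(F)=2$ and Proposition~\ref{qf-aniso}, with $\sqrt{-1}\in L$ absorbing the sign discrepancies. The only (harmless) difference is that you verify $v_Z(G)=-2$ unconditionally in three cases where the paper does it in two under the standing hypothesis.
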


\begin{proof}
\autosyncline{822}Write $G_N := (Z + Z^2) + X^3$ and $G_D := Z^3 + Z^2 X^3$
such that $G = G_N/G_D$.
Assume that $v_{Z\inv}(X) \geq 0$.
Then $v_{Z\inv}(G_N) = -2$ and $v_{Z\inv}(G_D) = -3$, such that $v_{Z\inv}(G) = 1$.
If $v_Z(X) \geq 1$, then $v_Z(G_N) = 1$ and $v_Z(G_D) = 3$, such that $v_Z(G) = -2$.
\autosyncline{827}If $v_Z(X) \leq 0$, then $v_Z(G_N) = 3 v_Z(X)$ and $v_Z(G_D) = 2 + 3 v_Z(X)$, such that $v_Z(G) = -2$.
Summarized, if $v_{Z\inv}(X) \geq 0$, then we have $v_{Z\inv}(G) = 1$ and $v_Z(G) = -2$.
Proposition~\ref{qf-iso} gives us that \eqref{qf1} and \eqref{qf2}
are indeed isotropic for some choice of $\gamma_3$ and $\gamma_5$ in $\calU$.

\autosyncline{832}Conversely, assume that $v_{Z\inv}(X) < 0$.
We must show that one of the forms \eqref{qf1} or \eqref{qf2} is anisotropic
for every $\gamma_3, \gamma_5$ with non-negative valuation at $Z\inv$.
Since $v_{Z\inv}(X) \leq -1$, we have $v_{Z\inv}(G_N) = 3 v_{Z\inv}(X)$
and $v_{Z\inv}(G_D) = -2 + 3 v_{Z\inv}(X)$.
\autosyncline{837}Therefore $v_{Z\inv}(G) = 2$.
Since $v_{Z\inv}(\gamma_i) \geq 0$, it follows from \eqref{qff} that $v_{Z\inv}(F) = 2$.
Hypothesis $(\mathcal{H})$ says that $\form{1,\alpha} \form{1,\pi}$
is locally anisotropic at $\frakp$, hence it is also globally anisotropic over $L$.
Since $L$ contains $\sqrt{-1}$, signs in quadratic forms do not matter.
\autosyncline{842}Therefore, we can apply Proposition~\ref{qf-aniso}.
\end{proof}

\begin{theorem}\label{deg-padic}
\autosyncline{846}Let $\calR$ be a subfield of a number field $K$ with fraction field $K$.
In the ring $\calR[Z]$,
the relation ``$\deg(X) = d$''
between $X \in \calR[Z] \setminus \{0\}$ and $d \in \Z_{\geq 0}$ is diophantine.
\end{theorem}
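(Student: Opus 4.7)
The plan is to reduce via Lemma~\ref{deg-equal} to constructing a diophantine relation $\delta(F,X)$ on $\calR[Z]^2$ whose restriction to $F \in \calR[Z]\setminus\{0\}$ and $X \in \Z[Z]\setminus\{0\}$ is equivalent to $\deg(F) = \deg(X)$. Now $\deg(F) = \deg(X)$ is exactly $v_{Z\inv}(F/X) = 0$ at the ``negative-degree'' valuation on $K(Z)$, which is the conjunction of $v_{Z\inv}(F/X) \geq 0$ and $v_{Z\inv}(X/F) \geq 0$. Thus it suffices to give a diophantine definition in $\calR[Z]$ of the condition ``$v_{Z\inv}(A) \geq 0$'' for $A \in K(Z)$ represented as a ratio $P/Q$ with $P,Q \in \calR[Z]$ and $Q \neq 0$.

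Since $K$ is a number field, it is $p$-adic for every prime $p$. I would first fix any odd prime $p$ and, via the cited proposition, pick a finite extension $L$ of $K$ satisfying Hypothesis $(\mathcal{H})$ with corresponding elements $\alpha, \pi \in L$. I take $\calU := \Z$, viewed as a set of constant elements of $L(Z)$. Then $\calU \cap \Q = \Z$ is dense in every finite product $\Q_{p_1} \times \cdots \times \Q_{p_m}$ by weak approximation, and every $\gamma \in \calU$ satisfies $v_{Z\inv}(\gamma) = 0$. Applying Proposition~\ref{qf-iff} now characterises, for any $A \in L(Z)$ with algebraic coefficients, the inequality $v_{Z\inv}(A) \geq 0$ by the existence of $\gamma_3, \gamma_5 \in \Z$ making the two quadratic forms \eqref{qf1} and \eqref{qf2} isotropic over $L(Z)$. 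Our $A = P/Q$ lies in $K(Z)$ and so automatically has algebraic coefficients.

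It then remains to transfer this criterion from $L(Z)$ back down to $\calR[Z]$. I would do this by a standard Weil restriction: fix a $K$-basis $\omega_1, \ldots, \omega_n$ of $L$, encode every existential variable in $L(Z)$ as an $n$-tuple of variables in $K(Z)$, and expand each isotropy equation $\sum \epsilon_i u_i^2 = 0$ using the multiplication table of $L/K$ into a finite system of polynomial equations in $K(Z)$ with coefficients in $K$. Each variable in $K(Z)$ is then encoded as a pair $(p,q) \in \calR[Z]^2$ with $q \neq 0$ via $p/q$; the non-vanishing condition $q \neq 0$ is diophantine by \cite{mb-nonnul}. The fixed constants $\alpha, \pi$ and the entries of the multiplication table of $L/K$ are specific elements of $K = \calR$ and appear as coefficients. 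The constraint ``$\gamma_i \in \Z$'' is diophantine in $\calR[Z]$ by \cite{shlapentokh-poly}, Theorem~5.1.

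The hard part will be bookkeeping the Weil restriction carefully so that the isotropy condition over $L(Z)$ descends to an honest diophantine condition on pairs $(P,Q) \in \calR[Z]^2$ without introducing spurious solutions or hidden denominators, and checking that the ``clearing of denominators'' in passing from $K(Z)$-equations to $\calR[Z]$-equations does not destroy the equivalence with $v_{Z\inv}(P/Q) \geq 0$. Once that is in place, taking the conjunction of ``$v_{Z\inv}(F/X) \geq 0$'' and ``$v_{Z\inv}(X/F) \geq 0$'' produces a relation $\delta(F,X)$ satisfying the hypothesis of Lemma~\ref{deg-equal}, and the theorem follows.
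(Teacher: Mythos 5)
Your overall route is the paper's: reduce via Lemma~\ref{deg-equal} to the relation $\deg(F)\leq\deg(X)$, i.e.\ $v_{Z\inv}(F/X)\geq 0$, pass to a finite extension $L$ of $K$ satisfying Hypothesis $(\mathcal{H})$, interpret $L(Z)$ diophantinely over $\calR[Z]$ via non-vanishing of denominators and a $K$-basis of $L$, and invoke Proposition~\ref{qf-iff}. However, there is one genuine error: your choice $\calU:=\Z$ does not satisfy the hypothesis of Propositions~\ref{qf-iso} and~\ref{qf-iff}. Those propositions require $\calU\cap\Q$ to be dense in $\Q_{p_1}\times\dots\times\Q_{p_m}$ for every finite set of primes, and $\Z$ is \emph{not} dense there --- weak approximation is a statement about $\Q$, not $\Z$. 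The closure of $\Z$ in $\Q_{p_1}\times\dots\times\Q_{p_m}$ is $\Z_{p_1}\times\dots\times\Z_{p_m}$; an element such as $1/p_1$ in the first factor cannot be approximated $p_1$-adically by integers, since integers all have non-negative $p_1$-adic valuation. So with $\calU=\Z$ you cannot conclude that suitable $\gamma_3,\gamma_5$ exist, and the forward direction of Proposition~\ref{qf-iff} breaks down.

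The fix is exactly the point where some care is needed and where the paper does something slightly less obvious: one must enlarge $\calU$ so that $\calU\cap\Q\supseteq\Q$ (so density holds by weak approximation) while keeping $v_{Z\inv}(A)\geq 0$ for all $A\in\calU$ (needed for the converse direction of Proposition~\ref{qf-iff}) and keeping $\calU$ diophantine. The paper takes
$$
	\calU=\set{n/P}{n\in\Z ~\wedge~ P\in\calR[Z]\setminus\{0\}}\subseteq K(Z),
$$
which contains $\Q$ (take $P$ a non-zero integer constant; note $\Q$ need not lie in $\calR$ itself, so one really does need quotients here), satisfies $v_{Z\inv}(n/P)=\deg(P)\geq 0$, and is diophantine because $\Z$ is diophantine in $\calR[Z]$ and $P\neq 0$ is diophantine by \cite{mb-nonnul}. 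This is also why the paper remarks that $\calU$ may be taken inside $L(Z)$ rather than $L$: enlarging $\calU$ only weakens Proposition~\ref{qf-iso}. With this substitution your argument goes through; the Weil-restriction bookkeeping you flag as the ``hard part'' is routine and is dispatched in the paper in one sentence.
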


\begin{proof}
\autosyncline{853}Let $X,Y \in \calR[Z] \setminus \{0\}$.
If we can give a diophantine definition of ``$\deg(X) \leq \deg(Y)$'',
then ``$\deg(X) \leq \deg(Y) ~\wedge~ \deg(Y) \leq \deg(X)$''
is a predicate $\delta(X,Y)$ which satisfies the conditions of Lemma~\ref{deg-equal}.

\autosyncline{858}Since the non-zero elements of $\calR[Z]$ form a diophantine subset of $\calR[Z]$
(see \cite{mb-nonnul}),
we can construct a diophantine interpretation of the fraction field $K(Z)$ over $\calR[Z]$.
Let $L$ be a finite extension of $K$ which satisfies Hypothesis $(\mathcal{H})$.
Using a basis of $L$ as a $K$-vector space,
\autosyncline{863}there is a diophantine model of $L(Z)$ over $K(Z)$.

\autosyncline{865}Since $\deg(X) \leq \deg(Y)$ is equivalent to $v_{Z\inv}(X/Y) \geq 0$,
it suffices to give a diophantine definition
of the predicate ``$v_{Z\inv}(X) \geq 0$'' with $X \in L(Z)$.
Let
$$
	\calU = \set{n/P}{n \in \Z ~\wedge~ P \in \calR[Z] \setminus \{0\}} \subseteq K(Z)
.$$
\autosyncline{872}By construction, every element $A \in \calU$ has $v_{Z\inv}(A) \geq 0$.
The set $\calU$ contains $\Q$,
which is clearly dense in every $\Q_{p_1} \times \dots \times \Q_{p_m}$.
Since quadratic forms being isotropic is a diophantine condition
and $\calU$ is diophantine,
\autosyncline{877}it follows by Proposition~\ref{qf-iff} that ``$v_{Z\inv}(X) \geq 0$'' is diophantine.
\end{proof}

\section{Recursively enumerable sets}\label{sec-re}

\autosyncline{884\\nfpoly}In this final section we discuss how having a diophantine definition of $\Z[Z]$
in $\calR[Z]$ gives us that r.e.\ subsets of $\calR[Z]^k$ are diophantine.
Recall that $\calR$ is a subring of a number field $K$ with fraction field $K$.

\autosyncline{888}Denef showed (see \cite{denef-zt}) that
r.e.\ subsets of $\Z[Z]^k$ are diophantine over $\Z[Z]$.
Since we showed in the preceding sections that $\Z[Z]$
is a diophantine subset of $\calR[Z]$,
it also follows that r.e.\ subsets of $\Z[Z]^k$ are diophantine over $\calR[Z]$.

\autosyncline{894}Let $\alpha \in \calR$ such that $K = \Q(\alpha)$ and let $d := [K : \Q]$.
Now any element $X$ of $\calR[Z]$ can be written as
\begin{equation}\label{write-basis}
	X = \frac{X_0 + X_1 \alpha + \dots + X_{d-1} \alpha^{d-1}}{y}
\end{equation}
\autosyncline{899}with $X_i$ in $\Z[Z]$ and $y$ in $\Z \setminus \{0\}$.

\autosyncline{901}Now let $\calS \subseteq \calR[Z]$ be an r.e.\ set,
we have to show that $\calS$ is diophantine.
To $\calS$ we associate a set $\calT \subseteq \Z[Z]^{d+1}$ using \eqref{write-basis}:
the set $\calT$ has one tuple $(X_0, X_1, \dots, X_{d-1}, y) \in \Z[Z]^{d+1}$
for every $X \in \calS$.
\autosyncline{906}This tuple $(X_0, X_1, \dots, X_{d-1}, y)$ is not unique but that is not a problem,
we can for example try all possible tuples and take the first one which works for a given $X$.
This way, we have a bijection between $\calS$ and $\calT$.
Moreover, the set $\calT$ will also be r.e.,\ since we can construct $\calT$ from $\calS$
using a recursive procedure.
\autosyncline{911}Since $\calT$ is a subset of $\Z[Z]^{d+1}$, it will be diophantine over $\calR[Z]$.
Now it immediately follows that $\calS$ is diophantine:
$$
	X \in \calS \iff \big(\exists (X_0, X_1, \dots, X_{d-1}, y) \in \calT\big)
		\big(X y = X_0 + X_1 \alpha + \dots + X_{d-1} \alpha^{d-1}\big)
.$$
\autosyncline{917}The argument for sets $\calS \subseteq \calR[Z]^k$ is very similar,
using a set $\calT \subseteq \Z[Z]^{(d+1)k}$.

\bibliographystyle{amsplain}
\bibliography{all}

\end{document}